\numberwithin{equation}{section}
\newcommand{\V}{\Vert}
\newcommand{\RR} {\mathbb R}
\newcommand{\pa} {\partial}
\newcommand{\Cal} {\mathcal}
\newcommand{\beq} {\begin{equation}}
\newcommand{\eeq} {\end{equation}}
\newtheorem{theorem}{Theorem}[section]
\newtheorem{remark}[theorem]{ Remark}
\newtheorem{corollary}[theorem]{Corollary}
\newtheorem{proposition}[theorem]{Proposition}
\newtheorem{lemma}[theorem]{Lemma}
\newtheorem{definition}[theorem]{Definition}
\begin{document}
\title[$L^p$ and $BMO_L$ continuity of multipliers of generalized Laplacians ]{Boundedness of spectral multipliers of generalized Laplacians on compact manifolds with boundary }

\author{Mayukh Mukherjee}
\thanks{The author was partially supported by NSF grant  DMS-1161620.}

\email{mathmukherjee@gmail.com}
\subjclass[2010]{35L05, 47A60, 58J60}
\begin{abstract} 

Consider a second order, strongly elliptic negative semidefinite differential operator $L$ (maybe a system) on a compact Riemannian manifold $\overline{M}$ with smooth boundary, where the domain of $L$ is defined by a coercive boundary condition. 
Classically known results, and also recent work in \cite{DOS} and \cite{DM} establish sufficient conditions for $L^\infty-\text{BMO}_L$ continuity 
of $\varphi(\sqrt{A})$, where $\varphi \in S^0_1(\RR)$, and $A$ is a suitable elliptic operator. 
Using a variant of the Cheeger-Gromov-Taylor functional calculus due to \cite{MMV}, and short time bounds on the integral kernel of $e^{tL}$ due to \cite{G}, we prove that a variant of such sufficient conditions holds for our operator $L$. 
\end{abstract}
\maketitle
\section{Introduction}
Consider a compact Riemannian manifold $\overline{M}$ with smooth metric and smooth boundary, and a second order strongly elliptic differential operator $L : L^2(M, E) \to L^2(M, E)$ with smooth coefficients, where $E$ is a complex vector bundle with Hermitian metric. 
Assume a regular elliptic boundary condition $B(x, \pa_x)u = 0$ on $\pa M$ (see Proposition 11.9, Chapter 5 of ~\cite{T6}) which makes $L$ into a negative semidefinite self-adjoint operator with domain $\Cal{D}(L) \subset H^2(M, E)$. 
For notational convenience, we will drop the letter $E$ when denoting the space of sections; for example, $L^2(M)$, or just $L^2$, will stand for $L^2(M, E)$ henceforth. Also, $T : B_1 \to B_2$ will mean that $T$ is a bounded linear operator from $B_1$ to $B_2$, where $B_i$ are Banach spaces.

Given a bounded continuous function $\psi : \RR \longrightarrow \RR$, the spectral theorem 
defines 
\beq\label{1.1}
\psi( \sqrt{-L}) : L^2(M) \longrightarrow L^2(M)
\eeq
as a bounded self-adjoint operator. Following ~\cite{T2} and ~\cite{T5}, here we consider functions $\varphi$ in the pseudodifferential function class $S^0_1(\RR)$, which means that 
\beq\label{pdc}
\varphi \in S^0_1(\RR) \Longrightarrow |\varphi^{(k)}(\lambda)| \lesssim (1 + |\lambda|)^{-k}, k = 0, 1, 2,...
\eeq

We wish to prove that
\begin{theorem}\label{mainres}
\beq\label{l-b}
\varphi(\sqrt{-L}) : L^\infty \longrightarrow \text{BMO}_L.
\eeq	
\end{theorem}

For the definition of $\text{BMO}_L$, see Definition \ref{4.1} and Lemma \ref{Indep}. 
 As a corollary, we get that 
 \begin{corollary}
 \beq\label{l-l}
 \varphi(\sqrt{-L}) : L^p \longrightarrow L^p, \text{   }\forall \text{  }p \in (1, \infty).
 \eeq
  \end{corollary}
  
  In the absence of a boundary, results like Theorem \ref{mainres} are well-known. Also well-known are sufficient conditions for such results in the general setting of a metric measure space, for example, see Theorem \ref{dm} below, which is due to \cite{DM}. However, except for well-behaved scalar elliptic operators with nice boundary conditions and global ``heat kernel'' bounds, such sufficient conditions are not easy to verify. As mentioned in the abstract, our main technical lemma for proving Theorem \ref{mainres}  is to check the following variant of the sufficiency condition proved in Theorem 3 of \cite{DM} (see also \cite{DY1}):
  \begin{lemma}\label{kl} With $\varphi^{\#}(\sqrt{-L})$ as in (\ref{S2})-(\ref{S3}), denote by $k^{\#}(x, y)$ and $k_t(x, y)$ the integral kernels of $\varphi^{\#}(\sqrt{-L})$ and the composite operator $\varphi^{\#}(\sqrt{-L})e^{tL}$ respectively. We have for some $\varepsilon > 0$, 
  	\beq\label{kilikili}
  	\sup_{t \in (0, \varepsilon]}\sup_{y \in \overline{M}}\int_{M \setminus B_{ \sqrt{t}}(y)}|k^{\#}(x, y) - k_t(x, y)|dx < \infty.
  	\eeq
  \end{lemma}
  
  Observe that the due to the generality of the elliptic operator $L$ (we do not assume, for instance, that $L$ can be written in the form $D^*D$, where $D$ is a first order differential operator), and due to the generality of the boundary conditions, Gaussian bounds on the integral kernel of $e^{tL}$ are rather non-trivial to derive. Short time bounds are known from the work in \cite{G}: 
  \begin{theorem}[Greiner]
  	Let $\overline{M}$ be a compact manifold with boundary, and $L$ be a second order self-adjoint negative semidefinite elliptic system defined by regular elliptic boundary conditions. If $p(t, x, y)$ denotes the integral kernel for $e^{tL}$, for some $\kappa \in (0, \infty)$ we have,
  	\beq\label{g}
  	|p(t, x, y)| \lesssim t^{-n/2}e^{-\kappa d(x, y)^2/t}, t \in (0, 1], x, y \in \overline{M},
  	\eeq
  	
  	and 
  	\beq\label{g1}
  	|\nabla_x p(t, x, y)| \lesssim t^{-n/2 - 1/2}e^{-\kappa d(x, y)^2/t}, t \in (0, 1], x, y \in \overline{M}.
  	\eeq
  \end{theorem}
  
  Since the ``heat kernel'' bounds here are only known for short time, we prove Theorem \ref{mainres} in two main steps. Firstly, we define a concept of local $\text{BMO}_L$ spaces, denoted by $\text{BMO}^\epsilon_L$ (see Definition \ref{4.1} below). Then, Lemma \ref{kl} proves that $\varphi^{\#}(\sqrt{-L}) : L^\infty \to \text{BMO}_L^\epsilon$. This we supplement by the following lemma, which proves that $\text{BMO}_{L}^\epsilon$ is in fact independent of $\epsilon$.
\begin{lemma}\label{Indep}
	\beq
	\V f\V_{\text{BMO}^{\sqrt{2R}}_L} \cong \V f\V_{\text{BMO}^{\sqrt{R}}_L}
	\eeq
	where $R > 0$.
\end{lemma}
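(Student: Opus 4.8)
\emph{Strategy.} The plan is to prove the two one-sided inequalities separately. One of them is immediate: a ball of radius $\le\sqrt R$ is in particular a ball of radius $\le\sqrt{2R}$, so the supremum defining $\V f\V_{\text{BMO}^{\sqrt R}_L}$ runs over a subfamily of the balls defining $\V f\V_{\text{BMO}^{\sqrt{2R}}_L}$; hence $\V f\V_{\text{BMO}^{\sqrt R}_L}\le\V f\V_{\text{BMO}^{\sqrt{2R}}_L}$. Consequently it remains to prove $\V f\V_{\text{BMO}^{\sqrt{2R}}_L}\lesssim\V f\V_{\text{BMO}^{\sqrt R}_L}$, and for this it is enough to bound $\fint_B|f-e^{tL}f|$, with $t=\rho^2$, for balls $B=B(x_0,\rho)$ with $\sqrt R<\rho\le\sqrt{2R}$ (balls of radius $\le\sqrt R$ are already controlled by $\V f\V_{\text{BMO}^{\sqrt R}_L}$ by definition).

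\emph{Splitting the heat time; the local term.} Fix such a ball and put $s=t/2$, so that $\sqrt s=\rho/\sqrt2\le\sqrt R$ while $s\in(R/2,R]$. From the semigroup identity $I-e^{tL}=(I-e^{sL})+e^{sL}(I-e^{sL})$ (valid since $t=2s$) we get $f-e^{tL}f=h+e^{sL}h$ with $h:=f-e^{sL}f$. To estimate $\fint_B|h|$ I would cover $B$ by finitely many balls $Q_1,\dots,Q_N$ of radius exactly $\sqrt s$ with centres in $B$; since the radius ratio $\rho/\sqrt s=\sqrt2$ is fixed and $\overline M$ is a compact, hence doubling, Riemannian manifold, both $N$ and the ratios $|Q_i|/|B|$ are bounded by constants depending only on $\overline M$ and $n$. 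On each $Q_i$ the radius $\sqrt s$ is $\le\sqrt R$ and the heat time $s=(\sqrt s)^2$ is exactly the one matching it, so $\fint_{Q_i}|h|\le\V f\V_{\text{BMO}^{\sqrt R}_L}$ by Definition \ref{4.1}; summing, $\fint_B|h|\le\frac1{|B|}\sum_i|Q_i|\fint_{Q_i}|h|\lesssim_{\overline M,n}\V f\V_{\text{BMO}^{\sqrt R}_L}$.

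\emph{The non-local term, and the main obstacle.} For $e^{sL}h$ I would argue at the global level. Covering $\overline M$ by finitely many $\sqrt s$-balls of bounded overlap and summing the $\text{BMO}^{\sqrt R}_L$-bound over them gives $\V h\V_{L^1(M)}=\V f-e^{sL}f\V_{L^1(M)}\lesssim_{\overline M,n}\V f\V_{\text{BMO}^{\sqrt R}_L}$; on the other hand, composing with itself the bound $\V e^{sL}\V_{\Cal L(L^2,L^\infty)}\lesssim 1+s^{-n/4}$ established in Section \ref{GFWK} (together with its dual $\V e^{sL}\V_{\Cal L(L^1,L^2)}\lesssim 1+s^{-n/4}$, by self-adjointness of $e^{sL}$) yields $\V e^{sL}\V_{\Cal L(L^1,L^\infty)}\lesssim 1+s^{-n/2}$, which for $s\in(R/2,R]$ is bounded by a constant depending on $R,\overline M,n$. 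Hence $\fint_B|e^{sL}h|\le\V e^{sL}h\V_{L^\infty}\lesssim_{R,\overline M,n}\V h\V_{L^1(M)}\lesssim\V f\V_{\text{BMO}^{\sqrt R}_L}$, and adding the two pieces completes the proof. The only genuinely non-formal step — and the reason the statement is confined to compact $\overline M$ — is this last one: a $\text{BMO}_L$ function is controlled only locally, at the scale of the given ball, whereas $e^{sL}$ averages over all of $\overline M$; compactness is precisely what lets us trade the global $L^1$-norm of $f-e^{sL}f$ for its local $\text{BMO}_L$-bound, and lets us use boundedness of $e^{sL}:L^1\to L^\infty$ for $s$ bounded away from $0$.
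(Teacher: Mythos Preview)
Your proof is correct and uses the same core ingredients as the paper's --- the semigroup identity relating $e^{2sL}$ to $e^{sL}$, a covering by balls of radius $\sqrt{s}$, the global bound $\|f-e^{sL}f\|_{L^1(M)}\lesssim\|f\|_{\text{BMO}^{\sqrt R}_L}$ coming from compactness, and the heat-kernel estimate $e^{sL}:L^1\to L^\infty$ --- but it is organized rather differently. The paper subtracts the two averages, splits the big ball into $B_{\sqrt r}(y)$ and the annulus $B_{\sqrt{2r}}(y)\setminus B_{\sqrt r}(y)$, obtains three terms $A+B+C$, and then controls $B$ and $C$ through an auxiliary lemma to the effect that $e^{rL}$ is bounded on $\text{BMO}^{\sqrt r}_L$; the proof of that auxiliary lemma is again the $L^1\to L^\infty$ heat bound together with the covering-of-$M$ argument. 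You instead use the clean algebraic decomposition $I-e^{tL}=(I-e^{sL})+e^{sL}(I-e^{sL})$ and bound $e^{sL}h$ directly in $L^\infty$, bypassing the annulus bookkeeping and the intermediate BMO-boundedness statement.

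What your organization buys is transparency: by restricting at the outset to $\rho\in(\sqrt R,\sqrt{2R}]$ you make explicit that $s=t/2\in(R/2,R]$ is bounded away from $0$, so the $R$-dependence of the constants (through $s^{-n/2}$) is visible and harmless. The paper's route, by contrast, isolates the reusable statement that $e^{rL}$ preserves $\text{BMO}^{\sqrt r}_L$, which has some independent interest. One very minor quibble: when you write ``composing with itself'' for the $L^1\to L^\infty$ bound, you should split the time, i.e.\ $e^{sL}=e^{(s/2)L}e^{(s/2)L}$ and compose $L^1\to L^2$ with $L^2\to L^\infty$ at time $s/2$; since $s/2\ge R/4$ this is still bounded away from $0$ and the conclusion stands.
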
  
\subsection{Tools, preliminaries and motivation}
Our main tool will be the functional calculus used in ~\cite{CGT}, namely
\beq\label{Fc}
\varphi(\sqrt{-L}) = \frac{1}{\sqrt{2\pi}}\int_{-\infty}^\infty \hat{\varphi}(t)e^{it\sqrt{-L}}dt.
\eeq

We see that $\sqrt{-L}$ is a first order elliptic self-adjoint operator with compact resolvent. So, Spec$(\sqrt{-L})$ is a discrete 
subset of $[0, \infty)$,  and it is no loss of generality to assume that $\varphi(\lambda)$ is an even function of $\lambda$. This reduces (\ref{Fc}) to 
\beq\label{fc}
\varphi(\sqrt{-L}) = \frac{1}{\sqrt{2\pi}}\int_{-\infty}^\infty \hat{\varphi}(t)\text{cos }t\sqrt{-L}\text{   }dt,
\eeq
where $\text{cos }t\sqrt{-L}$ is the solution operator to the ``wave equation'' with zero initial velocity, i.e.,
\beq
u(t, x) = \text{cos }t\sqrt{-L}f(x)
\eeq
where
\beq
\pa^2_t u - Lu = 0, \text{   }u(0, x) = f(x), \text{   }\pa_t u(0, x) = 0
\eeq
together with the coercive boundary conditions mentioned above.

We split (\ref{fc}) into two parts in the following way: let, for $a > 0$ small, $\theta (t)$ be an even function, such that
\beq\label{S1}
\theta \in C^\infty_c((-a, a)), \theta (t) \equiv 1 \text{ on }[-a/2, a/2].
\eeq

Now, denote
\beq\label{S2}\hat{\varphi}^{\#}(t) = \theta(t)\hat{\varphi}(t), \text{   }\hat{\varphi}^b(t) = (1 - \theta(t))\hat{\varphi}(t).
\eeq

Then, we can write
\begin{align}\label{S3}
\varphi(\sqrt{-L}) & = \frac{1}{\sqrt{2\pi}}\int_{-\infty}^\infty \hat{\varphi}^{\#}(t)\text{cos }t\sqrt{-L}dt + \frac{1}{\sqrt{2\pi}}\int_{-\infty}^\infty, \hat{\varphi}^b(t)\text{cos }t\sqrt{-L}dt\nonumber\\ 
& = \varphi^{\#}(\sqrt{-L}) + \varphi^b(\sqrt{-L}),
\end{align}
where both $\varphi^{\#}(\sqrt{-L})$ and $\varphi^b(\sqrt{-L})$ are self-adjoint. 

Now, (\ref{pdc}) implies that $\varphi^b$ is smooth and rapidly decreasing. 
So we have
\[
|\varphi^b(\lambda)| \lesssim (1 + |\lambda|)^{-m}, \text{   for some   } m > \frac{n}{2}.
\]

Then, the ellipticity of $L$ implies
\beq\label{1.12}
\varphi^b(\sqrt{-L}) : L^2(M) \longrightarrow H^m(M) \subset C(\overline{M}).
\eeq

This is because, we can write $\varphi^b(\lambda) = (1 + |\lambda|^2)^{-s/2}\psi^b(\lambda), \text{   } s \in \mathbb{N}$, where $\psi^b(\lambda)$ is a bounded function. That implies
\[
\varphi^b(\sqrt{-L}) = (I - L)^{-s/2}\psi^b(\sqrt{-L}).\]

By the spectral theorem, $\psi^b(\sqrt{-L})$ is bounded on $L^2$, and 
\[
(I - L)^{-s/2} : L^2(M) \rightarrow \mathcal{D}((-L)^{s/2}) \subset H^s(M).\]

We can see that $L^1(M) \subset C(\overline{M})^*$ and the inclusion is continuous. Also, when $m > n/2$, $H^m(M) \subset C(\overline{M})$ by Sobolev embedding. This gives, via duality on (\ref{1.12}), and self-adjointness of $\varphi^b(\sqrt{-L})$, that 
\beq\label{be4}
\varphi^b(\sqrt{-L}) : L^1(M) \longrightarrow L^2(M).
\eeq 

This 
interpolates with (\ref{1.1}) to give 
\beq
\varphi^b(\sqrt{-L}) : L^p(M) \longrightarrow L^p(M) \text{   } \forall \text{  } p \in (1, \infty).
\eeq

Results of the type (\ref{l-l}) have been well-studied for complete Riemannian manifolds $M$ without boundary and $L = \Delta$, the Laplace-Beltrami operator. When $M$ is compact, we refer to ~\cite{T1}, particularly the combination of Theorem 1.3 of Chapter XII and Theorem 2.5 of Chapter XI. 
For results in the non-compact setting, refer to ~\cite{CGT}, ~\cite{MMV}, \cite{T3}, ~\cite{T4}, ~\cite{T5}, etc. In the papers which deal with a non-compact setting, an additional difficulty is in analyzing $\varphi^b(\sqrt{-\Delta})$, because of the failure of the compact Sobolev embedding. Particularly for manifolds like the hyperbolic space, where the volume growth is exponential with respect to the distance, 
one requires more stringent restrictions on $\varphi$, namely, $\varphi$ being holomorphic on a strip around the $x$-axis, satisfying bounds of the form (\ref{pdc}). This condition was first introduced in the paper ~\cite{CS}. This motivated some research on the optimal width of said strip; 
to the best of our knowledge, the sharpest results in this direction appear in ~\cite{T4}. Also, in ~\cite{CGT}, $\varphi^{\#}(\sqrt{-\Delta})$ was analyzed as a pseudodifferential operator, something we cannot do in our present setting because of the presence of a boundary. 
It is well-known that the scalar square root of the Laplacian fails the ``transmission condition'' (see, for example, equation (18.2.20) of ~\cite{Ho}) to be a pseudodifferential operator. Our analysis will be a combination of methods from ~\cite{DOS} and ~\cite{DM}, and the approach of ~\cite{T5}, which follows and refines the results in ~\cite{MMV}. 

Let us discuss some of the main lines of investigation in ~\cite{DOS} and ~\cite{DM}. The results therein are rather general, set in the context of an open subset $X$ of a metric measure space of homogeneous type. If $L$ is a negative semi-definite, self-adjoint operator on $L^2(X)$, they assume that the integral kernel of $L$ satisfies
\beq\label{ass}
|p(t, x, y)| \lesssim t^{-n/m}e^{-\kappa\text{dist }(x, y)^{m/(m - 1)}/t^{1/(m - 1)}}, \text{  } 0 < t \leq 1, \kappa \in (0, \infty).
\eeq

Theorem 3.1 of ~\cite{DOS} establishes $L^p$-continuity of 
$\varphi((-L)^{1/m})$ via proving that it is weak type $(1, 1)$. What mainly concerns us with ~\cite{DOS} is the fact that they proved $L^p$ continuity of $\varphi((-L)^{1/m})$ via using the following result from ~\cite{DM}:
\begin{theorem}[Duong-McIntosh]\label{dm} Under the hypothesis on $L$ outlined above, let $k_t(x, y)$ denote the integral kernel of $\varphi((-L)^{1/m})(I - e^{tL})$, where $\varphi : \RR \to \RR$ is bounded and continuous. Also assume
	\beq\label{BIGASS}
	\sup_{t > 0}\sup_{y \in X} \int_{X \setminus B_{t^{1/m}}(y)} |k_t(x, y)| dx < \infty.
	\eeq
	Then $\varphi((-L)^{1/m})$ is of weak type $(1, 1)$.
\end{theorem}

It is naturally interesting to investigate when conditions like (\ref{BIGASS}) are satisfied. In this paper, our aim is to check that (\ref{BIGASS}) holds for a large class of operators $L$, as mentioned before, in the setting of a smooth compact manifold with boundary. 

We also note here that in the proof of $L^p$-boundedness of $\varphi(\sqrt{-L})$ in ~\cite{T2}, the main approach is to prove the following 
\begin{lemma}[Taylor]
	There exists $C < \infty$, independent of $s \in (0, 1]$ and of $y, y' \in \overline{M}$, such that 
	\[
	\text{dist } (y, y') \leq \frac{s}{2} \implies \V K^{\#}(., y) - K^{\#}(., y')\V_{L^1(B_1(y) \setminus B_s(y))} \leq C,
	\]
	where $K^{\#}(x, y)$ is the integral kernel of $\varphi^{\#}(\sqrt{-L})$.
\end{lemma}
With that in place, as is noted in ~\cite{T2}, the weak type $(1, 1)$ property of $\varphi^{\#}(\sqrt{-L})$ is a consequence of Proposition 3.1 of ~\cite{MMV}, which is a variant of Theorem 2.4 in Chapter III of ~\cite{CW}.
\

Note that we are yet to argue the $L^\infty - \text{BMO}_L$ boundedness of $\varphi^b(\sqrt{-L})$; this we will do at the beginning of Section \ref{3.6}. 
So, for all continuity related aspects, for the rest of our investigation, we will mainly be concerned with just $\varphi^{\#}(\sqrt{-L})$. These boundedness considerations will largely be addressed in Sections \ref{BMOD} and \ref{3.6}. For those sections, our standing assumptions will be the following:\newline
{\bf Assumption:}\label{3} $\text{cos }t\sqrt{-L}$ has finite speed of propagation, which, by scaling $L$ if necessary, we will assume to be $\leq 1$.

It seems a challenging question to determine when $\text{cos }t\sqrt{-L}$ has finite speed of propagation. However, for a reasonable class of operators $L$, we can prove the following ``Davies-Gaffney'' type estimates:
\begin{proposition}\label{FSOP}
	Let $-L = D^*D + H$ with the generalized Dirichlet or Neumann boundary conditions on $D$, as defined in (\ref{DC}) and (\ref{NC}), and with $H \geq 0$ in $L^2(M)$. Let $U, V$ be two open balls such that dist$(U, V) = r$. With $t>0$ fixed, let $\phi(x) = \frac{r}{t}\text{dist }(x, U)$ and $P = [D, e^\phi]$ denote the usual commutator operator. Then $\text{cos }t\sqrt{-L}$ has finite speed of propagation if it satisfies for all $v \in L^2(V)$ the following:
	\beq\label{cond}
	\V e^{-\phi /2}Pv\V_{L^2} \leq \frac{r}{t}\V e^{\phi /2}v\V_{L^2}.
	\eeq
\end{proposition}

(\ref{cond}) follows when $|D\phi|$ is bounded. As a special case, (\ref{cond}) follows trivially when $L = \Delta$ with the Dirichlet or Neumann boundary conditions. 

One last comment: 
for the purposes of proving Lemma \ref{kl}, we will also 
a modification of (\ref{Fc}), following ~\cite{MMV}, ~\cite{T5} and ~\cite{T2}. For details on this, see Subsection \ref{modi}. 

\subsection{Outline of the paper}
 In Section \ref{BMOD}, we prove that Definition \ref{4.1} of $\text{BMO}^\epsilon_L$ is independent of $\epsilon$, as long as we are on a compact setting. This is the content of Lemma \ref{Indep}. Then we proceed to prove our main technical lemma of the paper, Lemma \ref{kl}. We begin Section \ref{3.6} by arguing the $L^\infty - \text{BMO}_L$ continuity of $\varphi^b(\sqrt{-L})$, and then prove in Proposition \ref{4.4} (using Lemma \ref{kl}) the $L^\infty-\text{BMO}_L$ continuity of $\varphi^{\#}(\sqrt{-L})$, which finally proves Theorem \ref{mainres}. In Appendix 1, we collect together some useful information about the integral kernels of the operators $e^{tL}$ and $e^{-t\sqrt{-L}}$. The properties we establish are quite parallel to their usual scalar Laplacian counterparts, and are at the background of some of the estimates we derive in the main body of the paper. In Appendix 2, we prove some partial results towards establishing sufficient criteria for $\text{cos }t\sqrt{-L}$ to have finite propagation speed. To wit, we prove that for those operators $L$ which can be written in the specific form (\ref{special}), under generalized Dirichlet or Neumann boundary conditions (see (\ref{DC}) and (\ref{NC}) below), and under  the assumption (\ref{cond}), $\text{cos }t\sqrt{-L}$ has finite speed of propagation. This is the content of Proposition \ref{FSOP}.

\section{Proof of Lemma \ref{kl}}\label{BMOD}
\subsection{$\text{BMO}_L$ and its variants}
We combine the definition of BMO$_L$ in ~\cite{DY} with the definition of local BMO spaces in ~\cite{T3} to 
give the following
\begin{definition}\label{4.1}
$f \in L^1_{loc}(M)$ is in BMO$^\epsilon_L(M)$ if 
\beq
\frac{1}{|B|}\int_B |f(x) - e^{tL}f(x)|dx \leq C,
\eeq
where $\sqrt{t}$ is the radius of the ball $B$, and $B$ ranges over all balls in $M$ of radius $\leq \epsilon$. Let
\beq
\V f\V_{\text{BMO}^\epsilon_L} = \sup_{B \in \Cal{B}}\frac{1}{|B|}\int_B |f(x) - e^{tL}f(x)|dx,
\eeq
where $\sqrt{t}$ is the radius of the ball $B$, and $\Cal{B}$ contains all balls of radius $\leq \epsilon$.
\end{definition}
We now make the observation that our definition of $BMO_L^\epsilon$ is actually independent of the $\epsilon$ chosen. 

\begin{proof}
Clearly, 
\[
\V f\V_{\text{BMO}^{\sqrt{2R}}_L} \geq \V f\V_{\text{BMO}^{\sqrt{R}}_L}.\]
For the reverse inequality, let us fix a point $y \in \overline{M}$. Then we have, for $r \leq R$,
\begin{align*}
\frac{1}{|B_{\sqrt{2r}}(y)|}\int_{B_{\sqrt{2r}}(y)}|f(x) - e^{2rL}f(x)|dx - \frac{1}{|B_{\sqrt{r}}(y)|}\int_{B_{\sqrt{r}}(y)}|f(x) - e^{rL}f(x)|dx \\ \lesssim \frac{1}{|B_{\sqrt{r}}(y)|}\int_{B_{\sqrt{2r}}(y)}|f(x) - e^{2rL}f(x) - \chi_{B_{\sqrt{r}}(y)}(x)f(x) + \chi_{B_{\sqrt{r}}(y)}(x)e^{rL}f(x)|dx\\
= \frac{1}{|B_{\sqrt{r}}(y)|}\int_{B_{\sqrt{2r}}(y)}|\chi_A(x) f(x) - e^{2rL}f(x) + \chi_{B_{\sqrt{r}}(y)}(x) e^{rL}f(x)|dx,
\end{align*}
where $A$ denotes the ``annulus'' $B_{\sqrt{2r}}(y) \setminus B_{\sqrt{r}}(y)$, which can be covered by at most $K$ balls of radius $\sqrt{r}$, where $K$ is a positive number independent of $r$, because $\overline{M}$ is compact. Also, in the ensuing calculation, we tacitly use the fact that the volume of a ball of radius $r$ is uniformly bounded. \newline

Now, the last quantity in the above equation is 
\[
\leq \frac{1}{|B_{\sqrt{r}}(y)|}\int_{B_{\sqrt{2r}}(y)}|\chi_A(x) f(x) - \chi_A(x) e^{2rL}f(x)|dx \]\[ + \frac{1}{|B_{\sqrt{r}}(y)|}\int_{B_{\sqrt{2r}}(y)}|\chi_{B_{\sqrt{r}}(y)}(x) e^{2rL}f(x) - \chi_{B_{\sqrt{r}}(y)}(x) e^{rL}f(x)|dx 
\]
\[
\leq \frac{1}{|B_{\sqrt{r}}(y)|}\int_{B_{\sqrt{2r}}(y)}|\chi_A(x) f(x) - \chi_A(x) e^{rL}f(x)|dx \]
\[+ \frac{1}{|B_{\sqrt{r}}(y)|}\int_{B_{\sqrt{2r}}(y)}|\chi_A(x) e^{2rL}f(x) - \chi_A(x) e^{rL}f(x)|dx 
 \]
 \[
+ \frac{1}{|B_{\sqrt{r}}(y)|}\int_{B_{\sqrt{2r}}(y)}|\chi_{B_{\sqrt{r}}(y)}(x) e^{2rL}f(x) - \chi_{B_{\sqrt{r}}(y)}(x) e^{rL}f(x)|dx
\]
\[
= \frac{1}{|B_{\sqrt{r}}(y)|}\int_A|f(x) - e^{rL}f(x)|dx + \frac{1}{|B_{\sqrt{r}}(y)|}\int_A|e^{2rL}f(x) - e^{rL}f(x)|dx  
\]
\[
+ \frac{1}{|B_{\sqrt{r}}(y)|}\int_{B_{\sqrt{r}}(y)}|e^{2rL}f(x) - e^{rL}f(x)|dx\\
= A + B + C \text{  (say)  }.
\]

Putting everything together, we have that
\begin{align}\label{seshlala}
\frac{1}{|B_{\sqrt{2r}}(y)|}\int_{B_{\sqrt{2r}}(y)}|f(x) & - e^{2rL}f(x)|dx - \frac{1}{|B_{\sqrt{r}}(y)|}\int_{B_{\sqrt{r}}(y)}|f(x) - e^{rL}f(x)|dx \\
\lesssim A + B + C. \nonumber
\end{align}

Now, if we let $e^{rL}f(x) = g(x)$, and can prove that 
\beq\label{jalalema}
\V g\V_{\text{BMO}^{\sqrt{r}}_L} \lesssim \V f\V_{\text{BMO}^{\sqrt{r}}_L},
\eeq
then we have that each of $A, B, C$ is $\lesssim \V f\V_{\text{BMO}^{\sqrt{R}}_L}$, giving us our result from (\ref{seshlala}).\newline

Now we justify (\ref{jalalema}). Choose $\varepsilon > 0$ and let $\Cal{B}$ consist of all balls in $M$ whose radii are $\leq \varepsilon$. Choose $s > 0$ such that $\sqrt{s} \leq \varepsilon$. As per the notation above, if $u(x) = e^{sL}f(x) - f(x)$, observe that it suffices to prove that 
\[
\sup_{B_{\sqrt{s}} \in \Cal{B}}\frac{1}{|B_{\sqrt{s}}|}\int_{B_{\sqrt{s}}}|e^{sL}u(x)|dx \lesssim \sup_{B_{\sqrt{s}} \in \Cal{B}}\frac{1}{|B_{\sqrt{s}}|}\int_{B_{\sqrt{s}}}|u(x)|dx.
\]

Now, we have,
\begin{align*}
\sup_{B_{\sqrt{s}} \in \Cal{B}}\frac{1}{|B_{\sqrt{s}}|}\int_{B_{\sqrt{s}}}|e^{sL}u(x)|dx & \lesssim \sup_{B_{\sqrt{s}} \in \Cal{B}}\frac{1}{|B_{\sqrt{s}}|}\int_{B_{\sqrt{s}}}\int_M |p(s, z, x)u(z)|dzdx\\
& \lesssim \sup_{B_{\sqrt{s}} \in \Cal{B}}\frac{1}{|B_{\sqrt{s}}|}s^{-n/2}\int_{B_{\sqrt{s}}}\V u\V_{L^1(M)}\text{  (from (\ref{g}))}\\
& = s^{-n/2}\V u\V_{L^1(M)}.
\end{align*}

So we are done if we can prove that 
\[
\V u\V_{L^1(M)} \lesssim \sup_{B_{\sqrt{s}} \in \Cal{B}}\frac{1}{|B_{\sqrt{s}}|}\V u\V_{L^1(B_{\sqrt{s}})}.\]

Consider a partition of $\overline{M}$ into balls coming from $\Cal{B}$. Let\label{sym19} $\overline{M} =  \coprod_n B_n$, where $B_n \in \Cal{B}$. Then,
\begin{align*}
\frac{1}{|M|}\V u\V_{L^1(M)} & = \frac{1}{|M|}\sum_n \V u\V_{L^1(B_n)} = \frac{1}{|M|}\sum_n |B_n|\frac{1}{|B_n|}\V u\V_{L^1(B_n)}\\
& \leq \frac{1}{|M|} \sum_n |B_n| \sup_{B \in \Cal{B}}\frac{1}{|B|}\V u\V_{L^1(B)}  = \sup_{B \in \Cal{B}}\frac{1}{|B|}\V u\V_{L^1(B)}.
\end{align*}

This finishes the proof.
\end{proof}
\subsection{A modification of the \cite{CGT} functional calculus}\label{modi}
At this point, let us recall the main approach of ~\cite{MMV}, ~\cite{T5} and ~\cite{T2}. The analysis in these papers avoided producing a parametrix for (\ref{fc}). Instead, they replaced (\ref{fc}) by the following
\beq\label{fc1}
\varphi(\sqrt{-L}) = \frac{1}{2} \int^\infty_{-\infty}\varphi_k(t)\Cal{J}_{k - 1/2}(t\sqrt{-L})\text{   }dt,
\eeq
where 
\[
\Cal{J}_\nu(\lambda) = \lambda^{-\nu}J_\nu(\lambda),
\]
$J_\nu(\lambda)$ denoting the standard Bessel function (see ~\cite{T5}, equation (3.1) and ~\cite{T6}, Chapter 3, Section 6 for more details on Bessel functions), and
\[
\varphi_k(t) = \prod^{k}_{j = 1}(-t\frac{d}{dt} + 2j - 2)\hat{\varphi}(t).
\]

\cite{T5} derives (\ref{fc1}) from (\ref{fc}) by an integration by parts argument (see (3.7) - (3.9) of ~\cite{T5}). 
Similarly, from ~\cite{T5}, (3.14), we have
\beq\label{B}
\varphi^{\#}(\sqrt{-L}) = \frac{1}{2} \int^\infty_{-\infty}\psi_k(t)\Cal{J}_{k - 1/2}(t\sqrt{-L})\text{  }dt
\eeq
with 
\beq\label{S11}
\psi_k(t) = \prod^k_{j = 1}(-t\frac{d}{dt} + 2j - 2)\hat{\varphi}^{\#}(t), \text{ where } \text{supp  }\psi_k \subset [-a, a]. 
\eeq

Also, (\ref{pdc}) implies 
\[
|(t\pa_t)^j\hat{\varphi}(t)| \leq C_j|t|^{-1}, \text{   } \forall j \in \{0, 1,...,[\frac{n}{2}] + 2\},
\]

which in turn implies 
\beq\label{boddopr}
|\psi_k(t)| \leq C_k |t|^{-1}, \text{   } 0 \leq k \leq [\frac{n}{2}] + 2.
\eeq

Now, let $k^{\#}(x, y)$ denote the integral kernel of $\varphi^{\#}(\sqrt{-L})$, that is,
\beq\label{chherede}
\varphi^{\#}(\sqrt{-L})f(x) = \int_M k^{\#}(x, y)f(y)dy.
\eeq

Without any loss of generality, we can scale $L$ so that the speed of propagation of $\text{cos  }t\sqrt{-L}$ is $\leq 1$, which has been stated as an assumption on page \pageref{3}. Also, let us select $a = 1$ in (\ref{S11}) and (\ref{S1}).

Now we address one fundamental question: why this choice of $a$ and why is the finite propagation speed of $\text{cos }t\sqrt{-L}$ so important? This has to do with the support of the integral kernel $k^{\#}(x, y)$. If the speed of propagation of  $\text{cos  }t\sqrt{-L}$ is $\leq 1$, then $k^{\#}(x, y)$ is supported within a distance $\leq 1$ from the diagonal. 
Let us justify this: we have
\[
\varphi^{\#}(\sqrt{-L})f(x) = \int_M k^{\#}(x, y)f(y)dy = \frac{1}{\sqrt{2\pi}}\int^\infty_{-\infty} \hat{\varphi}^{\#} (t) \text{cos }t\sqrt{-L}f(x)dt.
\]

Suppose the propagation speed of $\text{cos }t\sqrt{-L}$ is $\leq 1$. Then, when $|t| \leq 1$, 
\[
\text{supp }\text{cos }t\sqrt{-L}f(x) \subset \{ x \in \overline{M} : \text{dist }(x, \text{supp }f) \leq t\}.\] 

When $|t| > 1$, $\hat{\varphi}^{\#}(t) = 0$. So, for all $t \in \RR$, $\varphi^{\#}(\sqrt{-L})f(x)$ will be zero for all $x \in \overline{M}$ such that dist$(x, \text{supp }f) > 1$. Since this happens for all $f \in L^2(M)$, we have that $k^{\#}(x, y)$ is supported within a distance of 1 from the diagonal. This property will be crucially used in the sequel.

Using (\ref{B}), (\ref{S11}) and the fact that $\psi_k$ is an even function, we can write
\beq\label{hihi}
k^{\#}(x, z) = \int^1_0\psi_k(s)B_k(s, x, z)ds
\eeq
where $B_k(t, x, y)$ is the integral kernel of $\Cal{J}_{k - 1/2}(t\sqrt{-L})$, that is,
\[
\Cal{J}_{k - 1/2}(t\sqrt{-L})f(x) = \int_M B_k(t, x, y)f(y) dy.
\]

Let us also record the following formula: 
\beq\label{arektajalale}
\Cal{J}_{k - 1/2}(t\sqrt{-L}) \approx \int^1_{-1} (1 - s^2)^{k - 1}\text{cos } st\sqrt{-L} ds.
\eeq

 We have assumed that the speed of propagation of $\text{cos }t\sqrt{-L}$ is $\leq 1$. Observe that, written in symbols, this means, 
\[
\text{supp } f \subset K \Rightarrow \text{supp }\text{cos }t\sqrt{-L}f \subset K_{|t|},
\]
where $K_{|t|} = \{x \in \overline{M} : \text{dist}(x, K) \leq |t|\}$. This gives, in conjunction with (\ref{arektajalale}),
\beq\label{seshmuhurto}
\text{supp } f \subset K \Rightarrow \text{supp }\Cal{J}_{k - 1/2}(t\sqrt{-L})f \subset K_{|t|}.
\eeq

We now derive a technical estimate on $\V B_k(s, x, .)\V_{L^2(B_1(x)})$ which we will find essential in the sequel. These estimates are variants of Lemma 2.2 in ~\cite{T2}.
\begin{lemma}\label{label}
	 If $G : \RR \longrightarrow \RR$ satisfies
	 \beq
	 |G(\lambda)| \lesssim (1 + |\lambda|)^{-\gamma - 1}, \gamma > n/2,
	 \eeq
	 
	 then 
	 \beq\label{G}
	 \V G(s\sqrt{-L})\V_{\Cal{L}(L^2, L^\infty)} \lesssim s^{-n/2}, s \in (0, 1].
	 \eeq
	 
	 
	 This implies, in particular, the following
\beq 
\V B_k(s, x, .)\V_{L^2(B_1(x) )} \lesssim s^{-n/2}, \text{      } s \in (0, 1], x \in \overline{M}.
\eeq
\end{lemma} 
\begin{proof}

We use the following estimate: 
\[
|\Cal{J}_{k - 1/2}(\lambda)| \lesssim (1 + |\lambda|)^{-k}, k > 0.
\]

We write
\beq\label{w}
G(s\sqrt{-L}) =  (I - s^2L)^{-\sigma}G(s\sqrt{-L})(I - s^2L)^\sigma, 2\sigma = \gamma + 1.
\eeq

Now, let $F(\lambda) = G(\lambda)(1 - \lambda^2)^{\sigma}$. Then, using $\gamma > n/2$ and $2\sigma = \gamma + 1$, we see that
\begin{align*}
|F(\lambda)| & = |G(\lambda)(1 - \lambda^2)^{\sigma}| \lesssim (1 + |\lambda|)^{-\gamma - 1}|1 - \lambda^2|^\sigma\\
& \lesssim (1 + |\lambda|)^{2\sigma - \gamma - 1} \leq C.
\end{align*}

Since $F$ is bounded, by the spectral theorem, $F(s\sqrt{-L}) : L^2 \longrightarrow L^2$ is continuous. So by virtue of (\ref{w}), our task is reduced to proving that 
\beq\label{red}
\V (I - s^2L)^{-\sigma}\V_{\Cal{L}(L^2, L^\infty)} \lesssim s^{-n/2}, \sigma > n/4.
\eeq

Now, we use the following identity from that can be derived from the definition of the gamma function: 
\[
(I - s^2L)^{-\sigma} \approx   \int^\infty_0 e^{-r}e^{rs^2L}r^{\sigma - 1}dr,
\]
which gives
\begin{align*}
\V (I - s^2L)^{-\sigma}\V_{\Cal{L}(L^2, L^\infty)} & \lesssim \int_0^{s^{-2}} e^{-r}\V e^{rs^2L}\V_{\Cal{L}(L^2, L^\infty)}r^{\sigma - 1}dr + \int_{s^{-2}}^\infty e^{-r}\V e^{rs^2L}\V_{\Cal{L}(L^2, L^\infty)}r^{\sigma - 1}dr\\
& \lesssim \int^{s^{-2}}_0 e^{-r}(rs^2)^{-n/4}r^{\sigma - 1}dr + \int_{s^{-2}}^\infty e^{-r}r^{\sigma - 1}dr\\
& \lesssim (s^{-n/2} + 1) \lesssim s^{-n/2}, \text{  } s \in (0, 1],
\end{align*}
where in going from the second to the third step, we have used that 
\[\V e^{tL}\V_{\Cal{L}(L^2, L^\infty)} \lesssim t^{-n/4}, \text{  } t \in (0, 1]
\]
and
\[\V e^{tL}\V_{\Cal{L}(L^2, L^\infty)} \lesssim 1, \text{  } t > 1.\]

This establishes (\ref{G}). That is,
\[
|G(s\sqrt{-L}) f(x)| \lesssim s^{-n/2}\V f\V_{L^2}, \text{  } s \in (0, 1].
\]

In particular, with $f = \delta_x$ and using the compactness of $\overline{M}$, we have
\[
\V g(s, x, .)\V_{L^2} \lesssim s^{-n/2}, \text{  } s \in (0, 1].
\]
\end{proof}
 
We are in a position to prove Lemma \ref{kl}.

\begin{proof}
We have
\beq\label{kukur}
\varphi^{\#}(\sqrt{-L})e^{tL}f(x) = \int_M k_t(x, y)f(y) dy.
\eeq

Also,
\begin{align*}
\varphi^{\#}(\sqrt{-L})e^{tL}f(x) & = \int_M k^{\#}(x, y)e^{tL}f(y)dy
 = \int_M k^{\#}(x, y)\int_M p(t, y, z) f(z) dzdy\\
& = \int_M\int_M k^{\#}(x, y)p(t, y, z) f(z) dzdy.
\end{align*}

Interchanging the variables $y$ and $z$, we get
\beq\label{beral}
\varphi^{\#}(\sqrt{-L})e^{tL}f(x) = \int_M \int_M k^{\#}(x, z)p(t, z, y) f(y) dydz.
\eeq

Comparing (\ref{kukur}) and (\ref{beral}), we get 
\[
k_t(x, y) = \int_M k^{\#}(x, z)p(t, z, y)dz = e^{tL_y}k^{\#}(x, y).
\]

Interchanging $x$ and $y$ and using the symmetry of the integral kernels, we get
\[
k_t(x, y) = k_t(y, x) = e^{tL_x}k^{\#}(y, x) = e^{tL_x}k^{\#}(x, y).
\]

Henceforth, we shall drop the subscript $x$ in $L_x$ and $L$ will refer to a differential operator in the $x$-variable, unless otherwise mentioned explicitly. To show (\ref{kilikili}), 
all we want is a uniform bound on 
\beq\label{ub}
\V e^{tL}k^{\#}(., y) - k^{\#}(., y)\V_{L^1(M \setminus B_{\sqrt{t}}(y))}.
\eeq

To derive (\ref{ub}), it is clear (by the Mean value theorem) that a uniform bound on 
\[
\V e^{t'L}tLk^{\#}(., y)\V_{L^1(M \setminus B_{\sqrt{t}}(y))},
\]
where $t' \in (0, t]$, will suffice. 
Now, since $k^{\#}(x, y)$ is a fixed kernel, we can choose $t$ small enough such that 
\[
\V e^{t'L}tLk^{\#}(., y)\V_{L^1(M \setminus B_{\sqrt{t}}(y))} \leq C\V tLk^{\#}(., y)\V_{L^1(M \setminus B_{\sqrt{t}}(y))},
\]
where $C$ does not depend on $t$. This latter quantity, using the relation between $k^{\#}(x, y)$ and $B_k(s, x, y)$ given by (\ref{hihi}), is equal to
\beq\label{seshnei}
\V tL\int_0^1 \psi_k (s) B_k(s, ., y)ds\V_{L^1(M \setminus B_{\sqrt{t}}(y))}.
\eeq

Now, when $s' \leq \sqrt{t}$, we have by (\ref{seshmuhurto}) that $\int_0^{s'} \psi_k(s) B(s, x, y) ds$ is supported on $\{(x, y) \in \overline{M} \times \overline{M} : \text{dist}(x, y) \leq s'\}$. So, (\ref{seshnei}) gives via (\ref{boddopr}) that,
\[
\V tL\int_0^1 \psi_k (s) B_k(s, ., y)ds\V_{L^1(M \setminus B_{\sqrt{t}}(y))} \lesssim t\int_{\sqrt{t}}^{1}\frac{1}{s} \V LB_k(s, ., y)\V_{L^1(M \setminus B_{\sqrt{t}}(y))}ds,
\]
which we must prove to be uniformly bounded. If we can prove
\[
\V LB_k(s, ., y)\V_{L^1(M \setminus B_{\sqrt{t}}(y))} \lesssim s^{-2},
\]
then we are done. Observe that this will be implied by 
\beq\label{impby}
\V LB_k(s, ., y)\V_{L^2(M \setminus B_{\sqrt{t}}(y))} \lesssim s^{-n/2 - 2}.
\eeq

This is because, from (\ref{seshmuhurto}), we see that $B_k(s, ., y)$ is supported on the ball $B_s(y) \subset \overline{M}$, so 
\begin{align*}
\V LB_k(s, ., y)\V_{L^1(M \setminus B_{\sqrt{t}}(y))}  & \lesssim |B_s(y)|^{1/2}\V LB_k(s, ., y)\V_{L^2(M \setminus B_{\sqrt{t}}(y))}\\
& \lesssim |B_s(y)|^{1/2}s^{-n/2 - 2} \lesssim s^{-2}.
\end{align*}

So, we are done if we can prove that,
\begin{equation}\label{PIA}
\V LB_k(s, ., y)\V_{L^2} \lesssim s^{-n/2 - 2}.
\end{equation}

We observe that (\ref{PIA}) is another variant of Lemma 2.2 of ~\cite{T2} and proceeds along absolutely similar lines. See the lemmas below, which finish the proof.

\end{proof}
\begin{lemma}
\begin{equation}\label{LsL}
\V Le^{-s\sqrt{-L}}\V_{\mathcal{L}(L^2, L^\infty)} \lesssim s^{-n/2 - 2}, s \in (0, 1].
\end{equation}
\end{lemma}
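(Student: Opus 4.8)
The plan is to deduce this from the operator estimate (\ref{G}) already established inside the proof of Lemma \ref{label}, by recognizing $Le^{-s\sqrt{-L}}$ as a rescaled function of $\sqrt{-L}$ of exactly the admissible type. Concretely, since $L=-(\sqrt{-L})^2$ and $\operatorname{Spec}(\sqrt{-L})\subset[0,\infty)$, the spectral theorem gives
\[
Le^{-s\sqrt{-L}} \;=\; \frac{1}{s^2}\,G\!\left(s\sqrt{-L}\right),\qquad G(\lambda)=-\lambda^2 e^{-|\lambda|},
\]
because on the spectrum the multiplier $\mu\mapsto -\mu^2 e^{-s\mu}$ equals $s^{-2}G(s\mu)$. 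Thus everything reduces to a bound on $G(s\sqrt{-L})$.

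Next I would check that $G$ falls under the hypothesis of (\ref{G}). The function $G:\RR\to\RR$ is even, and since exponential decay dominates any polynomial we have, for every $N\ge 0$,
\[
|G(\lambda)| \;=\; \lambda^2 e^{-|\lambda|} \;\lesssim\; (1+|\lambda|)^{-N}.
\]
Choosing $N=\gamma+1$ with $\gamma>n/2$ (say $\gamma=\lceil n/2\rceil$) puts $G$ in the class for which (\ref{G}) applies, so that $\|G(s\sqrt{-L})\|_{\mathcal{L}(L^2,L^\infty)}\lesssim s^{-n/2}$ for $s\in(0,1]$. Combining with the identity above,
\[
\|Le^{-s\sqrt{-L}}\|_{\mathcal{L}(L^2,L^\infty)} \;\le\; \frac{1}{s^2}\,\|G(s\sqrt{-L})\|_{\mathcal{L}(L^2,L^\infty)} \;\lesssim\; s^{-n/2-2},\qquad s\in(0,1],
\]
which is (\ref{LsL}). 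Note that (\ref{G}) does not require smoothness of $G$, only boundedness of $\lambda\mapsto G(\lambda)(1-\lambda^2)^{\sigma}$ with $2\sigma=\gamma+1$, so the corner of $G$ at the origin coming from the even extension causes no problem; the target space $L^\infty$ (rather than merely $C(\overline M)$) is handled exactly as in Lemma \ref{label}, via the bound $\|(I-s^2L)^{-\sigma}\|_{\mathcal{L}(L^2,L^\infty)}\lesssim s^{-n/2}$ together with the heat semigroup estimates $\|e^{tL}\|_{\mathcal{L}(L^2,L^\infty)}\lesssim t^{-n/4}$ for $t\in(0,1]$ and $\lesssim 1$ for $t>1$.

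A completely self-contained alternative, if one prefers not to route through (\ref{G}), is to use the subordination identity $e^{-s\sqrt{-L}}\approx\int_0^\infty s\,e^{-s^2/4r}r^{-3/2}e^{rL}\,dr$, differentiate under the integral to write $Le^{-s\sqrt{-L}}\approx\int_0^\infty s\,e^{-s^2/4r}r^{-3/2}Le^{rL}\,dr$, and estimate $\|Le^{rL}\|_{\mathcal{L}(L^2,L^\infty)}\lesssim r^{-n/4-1}$ for $r\in(0,1]$ (by splitting $Le^{rL}=e^{rL/2}(Le^{rL/2})$, using $\|Le^{rL/2}\|_{\mathcal{L}(L^2,L^2)}\lesssim r^{-1}$ from the spectral theorem and $\|e^{rL/2}\|_{\mathcal{L}(L^2,L^\infty)}\lesssim r^{-n/4}$ from (\ref{g})), together with a uniform bound for $r>1$; the resulting $\Gamma$-type integrals are evaluated just as in the computation of $I_1,I_2$ above. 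Either way there is no substantial obstacle: the only point that requires care is the bookkeeping in the scaling identity $Le^{-s\sqrt{-L}}=s^{-2}G(s\sqrt{-L})$ and confirming that $G$ genuinely meets the decay hypothesis of (\ref{G}); the rest is a direct appeal to machinery already in place.
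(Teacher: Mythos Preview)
Your argument is correct and takes a genuinely different route from the paper. The paper proves (\ref{LsL}) by PDE methods: it regards $u_s(x)=e^{-s\sqrt{-L}}f(x)$ as a solution of the elliptic system $(\partial_s^2+L)u=0$ on $(0,\infty)\times M$ with the given coercive boundary conditions, rescales a $\delta$-neighborhood of a point $(s_0,x_0)$ to unit size, applies Schauder estimates to obtain $C^{2,\alpha}$ control of the rescaled solution in terms of its $L^2$ norm, and then scales back --- the extra factor $s^{-2}$ arising precisely from the scaling behavior of $L$ postulated in Assumption~2, equation~(\ref{ASS}). Your approach stays entirely within the functional calculus: the identity $Le^{-s\sqrt{-L}}=s^{-2}G(s\sqrt{-L})$ with $G(\lambda)=-\lambda^2e^{-|\lambda|}$ reduces (\ref{LsL}) directly to the estimate (\ref{G}) already established inside Lemma~\ref{label}, which in turn rests only on the Greiner heat-kernel bound (\ref{g}). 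There is no circularity, since the proof of (\ref{G}) nowhere invokes (\ref{LsL}). A noteworthy byproduct is that your argument makes \emph{no use whatsoever} of the scaling hypothesis (\ref{ASS}); since the paper's proof of (\ref{LsL}) is the only place in the paper where (\ref{ASS}) is invoked, your route actually shows that Assumption~2 can be dropped from the standing hypotheses of Sections~\ref{BMOD} and~\ref{3.6}. The subordination alternative you sketch is also valid and likewise avoids (\ref{ASS}).
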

\begin{proof}
For $f \in L^2(M)$, call
\begin{equation}
u_s(x) = e^{-s\sqrt{-L}}f(x), s > 0, x \in \overline{M}.
\end{equation}

Then $u$ is a solution of 
\begin{align}
(\partial^2_s + L) u = 0, & \text{   on  } (0, \infty) \times M\\
B(x, \partial_x)u = 0, & \text{   on  } (0, \infty) \times \partial M,
\end{align}
where $B$ represents the coercive boundary condition defining $\mathcal{D}(L)$. We have, by the Hille Yosida theorem,
\[
\V u_s\V_{L^2(M)} = \V e^{-s\sqrt{-L}}f\V_{L^2} \leq \V f\V_{L^2(M)}, \forall s > 0.
\]

Let us pick $\delta \in (0, 1], s_0 = \delta,$ and $x_0 \in \overline{M}$. Let $U = \{x \in \overline{M} : \text{dist} (x, x_0) < 2\delta\}$. We now scale the $s$ and the $x$ variables by a factor of $1/\delta$, and let $v_s(x)$ denote the new function corresponding to $u_s$ in the scaled variables. Then $v$ solves
\begin{align}
(\partial^2_s + \tilde{L}) v = 0, & \text{   on  } (1/2, 3/2) \times \tilde{U},\\
\tilde{B}(x, \partial_x)v = 0, & \text{   on  } (1/2, 3/2) \times (\tilde{U} \cap \partial M),
\end{align}
which is a coercive boundary valued elliptic system with uniformly smooth coefficients and uniform ellipticity bounds. On calculation,
\[
\V v\V_{L^2((1/2, 3/2) \times \tilde{U})} \approx \delta^{-n/2}\V u\V_{L^2((\delta/2, 3\delta/2) \times U)} \lesssim \delta^{-n/2}\V f\V_{L^2}.\]

From elliptic regularity estimates 
we get 
that 
\begin{align}\label{sesh}
\V Lv_1(.)\V_{L^\infty(\tilde{U_0})} & \lesssim \V v\V_{L^2((1/2, 3/2) \times \tilde{U})} \lesssim \delta^{-n/2}\V f\V_{L^2}.
\end{align}
This can be obtained by iterating the estimate (11.29) of Chapter 5 of \cite{T6} to prove that $\V u\V_{H^k(I \times \tilde{U_0} )} \lesssim \V u\V_{L^2((1/2, 3/2) \times \tilde{U})}$, where $1 \in I \subset (1/2, 3/2)$, and taking $k$ high enough such that $H^k \hookrightarrow C^{2, \alpha}$ for some $\alpha$. This implies (\ref{sesh}). Scaling back gives our result 
\[
|Lu_{s_0}(x_0)| \lesssim \delta^{-n/2 - 2}\V f\V_{L^2}.\]
\end{proof}
\begin{lemma}
If $G : \mathbb{R} \rightarrow \mathbb{R}$ satisfies 
\[
|G(\lambda)| \lesssim (1 + |\lambda|)^{-\gamma - 1}, \gamma > \frac{n}{2},\]

then
\begin{equation}\label{Sting}
\V LG(s\sqrt{-L})\V_{\mathcal{L}(L^2, L^\infty)} \lesssim s^{-n/2 -2}, s \in (0, 1].
\end{equation}

This implies (\ref{PIA}).
\end{lemma}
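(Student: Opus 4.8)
The plan is to run the argument of Lemma \ref{label} (the derivation of (\ref{G})) while carrying one extra factor of $L$, and to check that this factor costs precisely $s^{-2}$. Exactly as there, set $2\sigma=\gamma+1$ and $F(\lambda)=G(\lambda)(1-\lambda^2)^{\sigma}$, so that $|F(\lambda)|\lesssim(1+|\lambda|)^{2\sigma-\gamma-1}\le C$ and hence, by the spectral theorem, $F(s\sqrt{-L})$ is bounded on $L^2$ with norm $\lesssim 1$ uniformly in $s$. Then I would write
\[
LG(s\sqrt{-L})=\big[L(I-s^2L)^{-1}\big]\big[(I-s^2L)^{-(\sigma-1)}\big]F(s\sqrt{-L}),
\]
a product of bounded functions of the self-adjoint operator $L$, so the factors commute and compose with no domain subtleties.

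Next I would estimate the three factors in turn. The algebraic identity $s^2L(I-s^2L)^{-1}=(I-s^2L)^{-1}-I$ together with $-L\ge 0$ (whence $\|(I-s^2L)^{-1}\|_{\mathcal L(L^2,L^2)}\le 1$) gives $\|L(I-s^2L)^{-1}\|_{\mathcal L(L^2,L^2)}\le 2s^{-2}$. The middle factor is controlled by (\ref{red}): $\|(I-s^2L)^{-(\sigma-1)}\|_{\mathcal L(L^2,L^\infty)}\lesssim s^{-n/2}$, which applies because $\sigma-1>\tfrac{n}{4}$. The last factor has norm $\lesssim 1$. Composing the $L^2\to L^2$ bound for $F(s\sqrt{-L})$, the $L^2\to L^\infty$ bound for $(I-s^2L)^{-(\sigma-1)}$, and the $L^2\to L^2$ bound for $L(I-s^2L)^{-1}$ yields $\|LG(s\sqrt{-L})\|_{\mathcal L(L^2,L^\infty)}\lesssim s^{-2}\cdot s^{-n/2}=s^{-n/2-2}$, which is (\ref{Sting}). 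Equivalently one may run the $\Gamma$-function/subordination integral $L(I-s^2L)^{-\sigma}\approx\int_0^\infty e^{-r}Le^{rs^2L}r^{\sigma-1}\,dr$ as in the proof of (\ref{red}), using the smoothing bound $\|Le^{tL}\|_{\mathcal L(L^2,L^\infty)}\lesssim t^{-n/4-1}$ for $t\in(0,1]$ — itself obtained by composing $\|e^{(t/2)L}\|_{\mathcal L(L^2,L^\infty)}\lesssim t^{-n/4}$ with the analyticity bound $\|Le^{(t/2)L}\|_{\mathcal L(L^2,L^2)}\lesssim t^{-1}$ — and $\|Le^{tL}\|_{\mathcal L(L^2,L^\infty)}\lesssim 1$ for $t\ge 1$; the resulting $r$-integral converges uniformly in $s$ under the same hypothesis on $\sigma$.

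To deduce (\ref{PIA}), apply the estimate with $G=\mathcal J_{k-1/2}$, which satisfies $|\mathcal J_{k-1/2}(\lambda)|\lesssim(1+|\lambda|)^{-k}$, so that $\gamma+1=k$; provided $k$ is chosen large enough that $\sigma-1=\tfrac{k}{2}-1>\tfrac{n}{4}$, we obtain $\|L\,\mathcal J_{k-1/2}(s\sqrt{-L})\|_{\mathcal L(L^2,L^\infty)}\lesssim s^{-n/2-2}$. The operator $L\,\mathcal J_{k-1/2}(s\sqrt{-L})$ is self-adjoint with symmetric kernel $L_xB_k(s,x,y)=L_yB_k(s,x,y)$, and any bounded kernel operator $K:L^2\to L^\infty$ satisfies $\sup_x\|K(x,\cdot)\|_{L^2}\le\|K\|_{\mathcal L(L^2,L^\infty)}$ (from $|Kg(x)|\le\|Kg\|_{L^\infty}\le\|K\|_{\mathcal L(L^2,L^\infty)}\|g\|_{L^2}$, taking the supremum over $\|g\|_{L^2}\le 1$), exactly as in the last step of Lemma \ref{label}. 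By this symmetry we get $\|LB_k(s,\cdot,y)\|_{L^2}\lesssim s^{-n/2-2}$ uniformly in $y$, i.e.\ (\ref{PIA}).

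The step I expect to be most delicate is the bookkeeping of exponents: one must make sure that after $L$ absorbs one power of the resolvent there is still enough of $(I-s^2L)^{-\sigma}$ left — namely $\sigma-1>\tfrac{n}{4}$, i.e.\ effectively $\gamma>\tfrac{n}{2}+1$ — to feed into the $L^2\to L^\infty$ smoothing estimate (\ref{red}), and correspondingly that the index $k$ in the application to Lemma \ref{kl} is taken large enough; once this power-counting closes, the remainder is the same functional-calculus-plus-Greiner-heat-kernel machinery already used for Lemmas \ref{3.2} and \ref{label}.
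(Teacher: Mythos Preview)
Your argument is correct, but it follows a genuinely different path from the paper's. The paper proves the estimate by passing through the \emph{Poisson} resolvent: it writes $L(I+s\sqrt{-L})^{-\sigma}=\frac{1}{\Gamma(\sigma)}\int_0^\infty e^{-t}\,Le^{-ts\sqrt{-L}}\,t^{\sigma-1}\,dt$ and feeds in the preceding lemma's bound $\|Le^{-s\sqrt{-L}}\|_{\mathcal L(L^2,L^\infty)}\lesssim s^{-n/2-2}$, which in turn was obtained from Schauder estimates for the elliptic system $(\partial_s^2+L)u=0$ and crucially uses the scaling hypothesis (\ref{ASS}). You instead stay with the \emph{heat} resolvent $(I-s^2L)^{-\sigma}$ already used in Lemma~\ref{label}, peel off one factor $L(I-s^2L)^{-1}$ whose $L^2\!\to\!L^2$ norm is $\le 2s^{-2}$ by pure spectral theory, and then reuse (\ref{red}) for the remaining $(I-s^2L)^{-(\sigma-1)}$. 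Because the three factors commute you may order them as $L^2\to L^2\to L^2\to L^\infty$, and the composition closes. Your alternative via $\|Le^{tL}\|_{\mathcal L(L^2,L^\infty)}\lesssim t^{-n/4-1}$ (analyticity plus the Greiner bound) is equally valid and is the heat-semigroup analogue of what the paper does with the Poisson semigroup.

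What your route buys is that it bypasses the Schauder lemma (\ref{LsL}) entirely, and with it the scaling assumption (\ref{ASS}); everything reduces to the Greiner pointwise bound (\ref{g}) and the spectral theorem. What the paper's route buys is a cleaner single-step $L^2\to L^\infty$ bound on $L(I+s\sqrt{-L})^{-\sigma}$ without having to split off a factor. You are right that both arguments in fact need $\gamma>n/2+1$ rather than the stated $\gamma>n/2$ (the paper's $t$-integral requires $\sigma>n/2+2$ with $\sigma\le\gamma+1$, matching your $\sigma-1>n/4$); this is harmless for the application to $\mathcal J_{k-1/2}$ since $k$ is at our disposal. Your deduction of (\ref{PIA}) from (\ref{Sting}) via symmetry of the kernel of the self-adjoint operator $L\,\mathcal J_{k-1/2}(s\sqrt{-L})$ is exactly the mechanism the paper has in mind when it says the step is ``absolutely similar to the proof of Lemma~\ref{label}.''
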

\begin{proof}
We start by using the formula
\[
(I + s\sqrt{-L})^{-\sigma} = \frac{1}{\Gamma(\sigma)}\int_0^\infty e^{-t}e^{-ts\sqrt{-L}}t^{\sigma - 1}dt.
\]

That implies, in conjunction with (\ref{LsL}), 
\begin{align}\label{hogan}
\V L(I + s\sqrt{-L})^{-\sigma}\V_{\mathcal{L}(L^2, L^\infty)} & \lesssim \int^{1/s}_0 e^{-t}(st)^{-n/2 - 2}t^{\sigma - 1}dt \nonumber \\ & + \int^{\infty}_{1/s} e^{-t}\V Le^{-ts\sqrt{-L}}\V_{\Cal{L}(L^2, L^\infty)}t^{\sigma - 1}dt \nonumber \\ 
& \lesssim s^{-n/2 - 2} + 1,
\end{align}
where $\sigma > n/2 + 2$. Also, in the above calculation, we have used that when $r \geq 1$,
\begin{align*}
\V Le^{-r\sqrt{-L}}\V_{\Cal{L}(L^2, L^\infty)} & = \V Le^{-\sqrt{-L}}e^{-(r - 1)\sqrt{-L}}\V_{\Cal{L}(L^2, L^\infty)}\\
& \lesssim \V e^{-(r - 1)\sqrt{-L}}\V_{\Cal{L}(L^2, L^2)} \leq 1.
\end{align*}

The facts that (\ref{hogan}) implies (\ref{Sting}) and (\ref{Sting}) implies (\ref{PIA}) are absolutely similar to the proof of Lemma \ref{label}.
\end{proof}

\section{$L^\infty - \text{BMO}_L$ continuity}\label{3.6}
We see that (\ref{be4}) gives, by duality, 
\beq\label{97}
\varphi^b(\sqrt{-L}) : L^2 \longrightarrow L^\infty.
\eeq

Now, we can prove the following inclusion on a compact manifold:
\beq\label{98}
\V .\V_{\text{BMO}_L} \lesssim \V .\V_{L^\infty}.
\eeq

This is because, for small enough $t > 0$, we have
\[
\frac{1}{|B_{\sqrt{t}}(y)|}\int_{B_{\sqrt{t}}(y)} |f(x) - e^{tL}f(x)|dx \leq \V f - e^{tL}f\V_{L^\infty} \lesssim \V f\V_{L^\infty}.
\]

(\ref{97}) and (\ref{98}) give
\beq
\V\varphi^b(\sqrt{-L})f\V_{\text{BMO}_L} \leq \V\varphi^b(\sqrt{-L})f\V_{L^\infty} \lesssim \V f\V_{L^2},
\eeq

which means
\beq\label{5}
\varphi^b(\sqrt{-L}) : L^2 \longrightarrow \text{BMO}_L.
\eeq

(\ref{98}) and (\ref{5}) give
\beq\label{6}
\varphi^b(\sqrt{-L}) : L^\infty \longrightarrow \text{BMO}_L.
\eeq
 
Finally, we have
\begin{proposition}\label{4.4}
\[
\varphi^{\#}(\sqrt{-L}) : L^\infty \longrightarrow \text{BMO}_L.
\]
\end{proposition}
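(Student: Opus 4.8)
The plan is to run a Calder\'on--Zygmund splitting argument tailored to the oscillation defining $\text{BMO}_L$: Lemma~\ref{kl} will handle the ``far'' contribution, while the $L^2$-boundedness of $\varphi^{\#}(\sqrt{-L})$ together with the $L^2$-contractivity of $e^{tL}$ will handle the ``near'' one. First, by Lemma~\ref{Indep} the space $\text{BMO}_L$ in Definition~\ref{4.1} is independent of the scale $\epsilon$, so I may take $\epsilon=\sqrt{\varepsilon}$ with $\varepsilon$ the constant furnished by Lemma~\ref{kl}; it then suffices to bound $\fint_B|g-e^{tL}g|\,dx\lesssim\|f\|_{L^\infty}$, with $g=\varphi^{\#}(\sqrt{-L})f$, uniformly over balls $B=B_{\sqrt{t}}(y_0)$ with $t\in(0,\varepsilon]$. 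Since $e^{tL}$ and $\varphi^{\#}(\sqrt{-L})$ are both Borel functions of the self-adjoint operator $-L$, they commute, so $e^{tL}g=\varphi^{\#}(\sqrt{-L})e^{tL}f$ has integral kernel $k_t$, whence $g(x)-e^{tL}g(x)=\int_M\bigl(k^{\#}(x,y)-k_t(x,y)\bigr)f(y)\,dy$.

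Next I would fix $f\in L^\infty(M)$ and $B=B_{\sqrt{t}}(y_0)$, and split $f=f_1+f_2$ with $f_1=f\,\chi_{B_{2\sqrt{t}}(y_0)}$, $f_2=f-f_1$, writing $g_i=\varphi^{\#}(\sqrt{-L})f_i$. For the local piece $g_1$ I would use no pointwise kernel information at all: by Cauchy--Schwarz, boundedness of $\varphi^{\#}(\sqrt{-L})$ on $L^2$, contractivity of $e^{tL}$ on $L^2$, and the uniform small-radius doubling bound $|B_{2\sqrt{t}}(y_0)|\lesssim|B_{\sqrt{t}}(y_0)|$ available on the compact manifold $\overline M$,
\[
\fint_B|g_1-e^{tL}g_1|\,dx\le|B|^{-1/2}\bigl(\|g_1\|_{L^2(M)}+\|e^{tL}g_1\|_{L^2(M)}\bigr)\lesssim|B|^{-1/2}\|f_1\|_{L^2(M)}\lesssim|B|^{-1/2}|B_{2\sqrt{t}}(y_0)|^{1/2}\|f\|_{L^\infty}\lesssim\|f\|_{L^\infty}.
\]
This is the classical device of absorbing the (generally non-integrable) diagonal singularity of $k^{\#}$ into the $L^2$ estimate, which is legitimate here precisely because $\text{BMO}_L$ records only a mean oscillation.

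For the far piece $g_2$, since $f_2$ is supported in $M\setminus B_{2\sqrt{t}}(y_0)$ we get $(g_2-e^{tL}g_2)(x)=\int_{M\setminus B_{2\sqrt{t}}(y_0)}\bigl(k^{\#}(x,y)-k_t(x,y)\bigr)f(y)\,dy$, and for $x\in B_{\sqrt{t}}(y_0)$ one has $M\setminus B_{2\sqrt{t}}(y_0)\subset M\setminus B_{\sqrt{t}}(x)$, so
\[
|(g_2-e^{tL}g_2)(x)|\le\|f\|_{L^\infty}\int_{M\setminus B_{\sqrt{t}}(x)}|k^{\#}(x,y)-k_t(x,y)|\,dy.
\]
By the symmetry $k^{\#}(x,y)=k^{\#}(y,x)$, $k_t(x,y)=k_t(y,x)$ established in the proof of Lemma~\ref{kl}, this last integral equals $\int_{M\setminus B_{\sqrt{t}}(x)}|k^{\#}(y,x)-k_t(y,x)|\,dy$, which with $x$ playing the role of the frozen point is dominated by the supremum in (\ref{kilikili}) --- a finite constant, uniform in $x$ and in $t\in(0,\varepsilon]$. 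Hence $\fint_B|g_2-e^{tL}g_2|\,dx\lesssim\|f\|_{L^\infty}$; adding this to the local estimate and taking the supremum over $B$ yields $\|\varphi^{\#}(\sqrt{-L})f\|_{\text{BMO}_L}\lesssim\|f\|_{L^\infty}$, which is the claim.

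I expect the only genuinely delicate step to be the far piece, where Lemma~\ref{kl} must be invoked with the two spatial arguments of the kernel interchanged; this is permissible only because $k^{\#}$ and $k_t$ are symmetric and because (\ref{kilikili}) already carries a supremum over the frozen variable. The local piece and the volume bookkeeping are routine on a compact manifold, and the passage from ``scale $\epsilon$'' balls to all small balls is exactly Lemma~\ref{Indep}.
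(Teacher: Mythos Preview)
Your proof is correct and follows essentially the same near/far decomposition as the paper: the local piece is handled by $L^2$-boundedness of $\varphi^{\#}(\sqrt{-L})$ plus contractivity of $e^{tL}$ and doubling, while the far piece is controlled by the Key Lemma~\ref{kl} together with kernel symmetry. Your choice of cutoff radius $2\sqrt{t}$ (rather than the paper's $\sqrt{t}+\delta$) is in fact the cleaner bookkeeping, since it makes the inclusion $M\setminus B_{2\sqrt{t}}(y_0)\subset M\setminus B_{\sqrt{t}}(x)$ for $x\in B_{\sqrt{t}}(y_0)$ immediate and so lets the Key Lemma apply with $x$ as the frozen point without any slack.
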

\begin{proof}
\begin{flalign*}
& \frac{1}{|B_{\sqrt{t}}(y)|}\int_{B_{\sqrt{t}}(y)}|\varphi^{\#}(\sqrt{-L})f(x) - e^{tL}\varphi^{\#}(\sqrt{-L})f(x)|dx \\
& \leq \frac{1}{|B_{\sqrt{t}}(y)|}\int_{B_{\sqrt{t}}(y)}|\varphi^{\#}(\sqrt{-L})\psi(x)f(x) - e^{tL}\varphi^{\#}(\sqrt{-L})\psi(x)f(x)|dx \\
& + \frac{1}{|B_{\sqrt{t}}(y)|}\int_{B_{\sqrt{t}}(y)}|\varphi^{\#}(\sqrt{-L})(1 - \psi(x))f(x) - e^{tL}\varphi^{\#}(\sqrt{-L})(1 - \psi(x))f(x)|dx \\
& = \Psi_1 + \Psi_2.
\end{flalign*}
where $\psi$ is a smooth cut-off function supported in $B_{\sqrt{t} + \delta}(y)$, and $\psi(x) \equiv 1$ on $B_{\sqrt{t}}(y)$. Clearly, by H{\"o}lder's inequality,
\begin{align*}
\Psi_1 & = \frac{1}{|B_{\sqrt{t}}(y)|}\int_{B_{\sqrt{t}}(y)} |\varphi^{\#}(\sqrt{-L})\psi(x)f(x) - e^{tL}\varphi^{\#}(\sqrt{-L})\psi(x)f(x)|dx\\
& \leq \frac{1}{\sqrt{|B_{\sqrt{t}}(y)|}}\V \varphi^{\#}(\sqrt{-L})\psi f - e^{tL}\varphi^{\#}(\sqrt{-L})\psi f\V_{L^2}\\
& \lesssim \frac{1}{\sqrt{|B_{\sqrt{t}}(y)|}}\V \varphi^{\#}(\sqrt{-L})\psi f\V_{L^2} \text{  (by contractivity of heat semigroup)}\\
& \lesssim \frac{1}{\sqrt{|B_{\sqrt{t}}(y)|}}\V \psi f\V_{L^2} \leq \frac{1}{\sqrt{|B_{\sqrt{t}}(y)|}}\V \psi f\V_{L^\infty}\sqrt{|B_{\sqrt{t} + \delta}(y)|} \lesssim \V \psi f\V_{L^\infty} \leq \V f\V_{L^\infty}.
\end{align*}

Also
\begin{align}
|\varphi^{\#}(\sqrt{-L})(I - e^{tL})(1 - \psi(x))f(x)| & \leq \int_{M \setminus B_{\sqrt{t}}(x)}|(k^{\#}(x, z) - k_t(x, z))(1 - \psi(z))f(z)|dz\\
& \leq \V f\V_{L^\infty}\int_{M \setminus B_{\sqrt{t}}(x)}|k^{\#}(x, z) - k_t(x, z)|dz 
\end{align}
where $k_t$ is the integral kernel of $\varphi^{\#}(\sqrt{-L})e^{tL}$. Now, choosing $t$ small\footnote{This is the main reason for introducing $BMO^\epsilon_L$ instead of just the usual $BMO_L$}, 
we are done by Lemma \ref{kl} and Lemma \ref{Indep}.
\end{proof}
So, let us see the immediate consequence of Proposition \ref{4.4}. By virtue of this, we immediately have $L^p$-continuity of $\varphi(\sqrt{-L})$ upon application of Theorem 5.6 of ~\cite{DY1}. This $L^p$-continuity result is not new of course. It is established in a more general context in ~\cite{DOS}. ~\cite{T2} has a different proof of this same result. 

Note also the $L^\infty-\text{BMO}_L$ result in Theorem 6.2 of ~\cite{DY1}. However, the conditions used to prove Theorem 6.2 in ~\cite{DY1} are stronger than (\ref{g}). So, in comparison, it can be said that we prove similar results in a more restricted setting, but with less assumptions on the heat semigroup $e^{tL}$. 

It is also a natural question to ask what happens if we adopt the seemingly more natural definition of $\text{BMO}$ spaces, as follows:
\beq
\V f\V_{\text{BMO}^\epsilon} = \sup_{B \in \Cal{B}} \frac{1}{|B|} \int_B |f(x) - A_tf(x)|dx,
\eeq
where $\Cal{B}$ contains all balls of radius less than or equal to $\epsilon$, $B$ is a ball of radius $\sqrt{t}$, and $A_t$ is the operator whose integral kernel is given by 
\[
h(t, x, y) = \frac{1}{|B_{\sqrt{t}}(y)|}\chi_{B_{\sqrt{t}}(y)}(x).
\]

On calculation, it can be seen that estimates on 
\[
\int_{d(x, y)\geq \sqrt{t}, d(y, z)\geq \frac{\sqrt{t}}{2}}|k^{\#}(x, y) - k^{\#}(x, z)|dx\]
will imply that 
\beq
\sup_{y \in \overline{M}}\sup_{t \in (0, \varepsilon]}\int_{d(x, y) \geq \sqrt{t}} |k^{\#}(x, y) - k_t(x, y)|dx \leq C,
\eeq
where $k_t$ represents the integral kernel of $\varphi^{\#}(\sqrt{-L})A_t$. The issue is, here $A_t$ and $\varphi^{\#}(\sqrt{-L})$ do not necessarily commute.
\section{Appendix 1: properties of heat and Poisson semigroups}\label{GFWK}

In this Appendix, we include some essential facts about the semigroups $e^{tL}$ and $e^{-t\sqrt{-L}}$. Henceforth, we will call them heat and Poisson semigroups respectively. Since $-L$ is a nonnegative semi-definite self-adjoint operator, by the Hille-Yosida theorem (see ~\cite{Sc}, Proposition 6.14), $e^{tL}$ gives a contraction semigroup on $L^2(M)$. Our first lemma is the following 
\begin{lemma}\label{3.2}
	\beq
	\V e^{tL}\V_{\Cal{L}(L^2, \text{Lip})} \lesssim (1 + t^{-n/4 - 1/2}),\text{   } t>0.
	\eeq
\end{lemma}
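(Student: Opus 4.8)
The plan is to estimate the $\text{Lip}$-norm of $e^{tL}f$ by bounding both $\|e^{tL}f\|_{L^\infty}$ and the Lipschitz seminorm $[e^{tL}f]_{\text{Lip}} = \sup_{x\neq x'} |e^{tL}f(x) - e^{tL}f(x')|/d(x,x')$, for $f \in L^2(M)$, using the Greiner estimates (\ref{g}) and (\ref{g1}) for the heat kernel $p(t,x,y)$ on the range $t \in (0,1]$, and a separate soft argument for $t \geq 1$. First I would handle $t \in (0,1]$. For the $L^\infty$ bound, write $e^{tL}f(x) = \int_M p(t,x,y)f(y)\,dy$ and apply Cauchy--Schwarz: $|e^{tL}f(x)| \leq \|p(t,x,\cdot)\|_{L^2}\|f\|_{L^2}$, and then $\|p(t,x,\cdot)\|_{L^2}^2 \lesssim \int_M t^{-n}e^{-2\kappa d(x,y)^2/t}\,dy \lesssim t^{-n}\cdot t^{n/2} = t^{-n/2}$, using the standard Gaussian integral estimate on a manifold of bounded geometry (the volume of $d(x,\cdot) \leq \rho$ grows at most like $\rho^n$, so the Gaussian integrates to $O(t^{n/2})$). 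Hence $\|e^{tL}f\|_{L^\infty} \lesssim t^{-n/4}\|f\|_{L^2}$.

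For the Lipschitz seminorm on $t \in (0,1]$, I would use the gradient estimate (\ref{g1}): $|\nabla_x e^{tL}f(x)| \leq \int_M |\nabla_x p(t,x,y)|\,|f(y)|\,dy \leq \|\nabla_x p(t,x,\cdot)\|_{L^2}\|f\|_{L^2}$, and the same Gaussian computation gives $\|\nabla_x p(t,x,\cdot)\|_{L^2}^2 \lesssim \int_M t^{-n-1}e^{-2\kappa d(x,y)^2/t}\,dy \lesssim t^{-n/2 - 1}$, so $\|\nabla_x e^{tL}f\|_{L^\infty} \lesssim t^{-n/4 - 1/2}\|f\|_{L^2}$. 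On a compact manifold (with boundary) the Lipschitz seminorm is controlled by the sup of the gradient — strictly one should be mildly careful near $\partial M$, but since $d(x,x')$ is comparable to the ambient distance and geodesics can be taken inside $\overline M$, this is routine. Combining the two pieces gives $\|e^{tL}f\|_{\text{Lip}} \lesssim t^{-n/4 - 1/2}\|f\|_{L^2}$ for $t \in (0,1]$, which is $\lesssim 1 + t^{-n/4 - 1/2}$.

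For $t \geq 1$, I would argue softly: write $e^{tL} = e^{(t-1)L}e^{L}$. By the spectral theorem $e^{(t-1)L}$ is a contraction on $L^2$ for $t \geq 1$, and by the $t = 1$ case above $e^{L} : L^2 \to \text{Lip}$ is bounded; composing, $\|e^{tL}f\|_{\text{Lip}} = \|e^{L}(e^{(t-1)L}f)\|_{\text{Lip}} \lesssim \|e^{(t-1)L}f\|_{L^2} \leq \|f\|_{L^2}$, which is $\lesssim 1 + t^{-n/4-1/2}$. Together with the small-time estimate this proves the lemma for all $t > 0$.

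The main obstacle I anticipate is purely a matter of care rather than depth: justifying the Gaussian volume integral $\int_M e^{-\kappa d(x,y)^2/t}\,dy \lesssim t^{n/2}$ uniformly in $x \in \overline M$, which relies on the compactness of $\overline M$ (bounded geometry, finite diameter, uniform lower bound on injectivity radius in the interior and controlled geometry up to the boundary) so that balls have Euclidean-type volume growth up to the scale where $M$ is exhausted; and relatedly, confirming that the passage from $\sup|\nabla_x(e^{tL}f)|$ to the Lipschitz seminorm is legitimate near $\partial M$. Both are standard for compact manifolds with smooth boundary, so no genuine difficulty is expected.
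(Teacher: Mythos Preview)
Your proposal is correct and follows essentially the same route as the paper: Cauchy--Schwarz against $\|\nabla_x p(t,x,\cdot)\|_{L^2}$, the Gaussian integral $\int_M e^{-\kappa d(x,y)^2/t}\,dy \lesssim t^{n/2}$ (the paper does this via a metric rescaling, you invoke bounded geometry), and for $t\geq 1$ a semigroup factorization $e^{tL}=e^{L}e^{(t-1)L}$ together with $L^2$-contractivity (the paper writes $e^{L/2}e^{L/2}e^{(t-1)L}$, which is the same trick). The only cosmetic addition in your version is that you separately bound $\|e^{tL}f\|_{L^\infty}$, whereas the paper treats the $\mathcal{L}(L^2,\text{Lip})$ norm purely through the gradient.
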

\begin{proof}
	
	We will first use the gradient estimate (\ref{g1}) to prove that
	\beq \label{SIMILA}
	\int_{M} |\nabla_x p(t, x, y)|^2 dy \lesssim t^{-n/2 - 1}, t \in (0, 1].
	\eeq
	
	Using (\ref{g1}), we see that \[
	\int_{M} |\nabla_x p(t, x, y)|^2 dy \lesssim t^{-n - 1}\int_{M} e^{-2\kappa d(x, y)^2/t}dy.\]

	Now we consider the identity mapping $i : (\overline{M}, g) \longrightarrow (\overline{M}, \frac{t}{2\kappa}g)$, where $(\overline{M}, \frac{t}{2\kappa}g)$ denotes the manifold $\overline{M}$ with a scaled metric. That gives, 
	\begin{align*}
	\int_{M} e^{-2\kappa d(x, y)^2/t} dy & = \int_{M} e^{-d(x, z)^2} |Ji|dz \approx  t^{n/2}\int_{M} e^{-2d(x, z)^2} dz  \approx t^{n/2},
	\end{align*}
	where $Ji$ denotes the Jacobian of the map $i$, 
	which finally gives (\ref{SIMILA}). 
	
	We have, as usual,
	\begin{align*}
	\nabla_x e^{tL}f(x) & = \int_M \nabla_x p(t, x, y) f(y) dy \leq \V\nabla_x p (t, x, .)\V_{L^2}\V f\V_{L^2},
	\end{align*}
	which gives, by (\ref{SIMILA}),
	\beq \label{esti1}
	\V e^{tL}\V_{\Cal{L}(L^2,\text{Lip})} \leq t^{-n/4 - 1/2}, t \in (0, 1].
	\eeq
	
	Now, if 
	$
	\text{Spec}(-L) \subset [\rho, \infty),
	$
	then 
	$
	\V e^{tL} f\V_{L^2} \leq e^{-t\rho}\V f\V_{L^2},
	$
	which, in conjunction with  
	(\ref{esti1}) means that for $t > 1$,
	\begin{align*}
	|\nabla e^{tL} f(x)| & = |\nabla e^{L/2}e^{L/2}e^{(t - 1)L}f(x)|\\
	& \lesssim \V e^{L/2}e^{(t - 1)L}f\V_{L^2} \text{   from  } (\ref{esti1})\\
	& \leq \V  e^{(t - 1)L}f\V_{L^2} \text{   (contractivity of heat semigroup)}\\
	& \leq e^{-\rho(t - 1)}\V f\V_{L^2} \lesssim \V f\V_{L^2}.
	\end{align*}
	
	So, putting (\ref{esti1}) and the last inequality together, 
	we have 
	\[
	\V e^{tL}\V_{\Cal{L}(L^2, \text{Lip})} \lesssim (1 + t^{-n/4 - 1/2}),\text{   } t > 0.
	\]
	
\end{proof}

Similarly, for the Poisson semigroup, we have
\begin{lemma}
	\beq\label{hghghg}
	\V e^{-t\sqrt{-L}}\V_{\Cal{L}(L^2, L^\infty)} \lesssim (1 + t^{-n/2}),\text{  } t > 0,
	\eeq
	
	and
	\beq
	\V e^{-t\sqrt{-L}}\V_{\Cal{L}(L^2, \text{Lip})} \lesssim (1 + t^{-n/2 - 1}), \text{  }t > 0.
	\eeq
\end{lemma}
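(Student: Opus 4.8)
The plan is to derive both bounds from the corresponding heat-semigroup estimates via the subordination formula, which expresses the Poisson semigroup as an average of the heat semigroup:
\[
e^{-t\sqrt{-L}} = \frac{t}{2\sqrt{\pi}} \int_0^\infty s^{-3/2} e^{-t^2/(4s)} e^{sL} \, ds.
\]
Granting this identity (valid for any nonnegative self-adjoint $-L$ by the spectral theorem, since the scalar identity $e^{-t\lambda} = \frac{t}{2\sqrt{\pi}}\int_0^\infty s^{-3/2} e^{-t^2/(4s)} e^{-s\lambda^2}\,ds$ holds for $\lambda \ge 0$), the two estimates follow by inserting the known operator norms of $e^{sL}$ and integrating the resulting scalar kernel in $s$.

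For the $\Cal{L}(L^2, L^\infty)$ bound, I would first record that the on-diagonal heat bound (\ref{g}), together with Cauchy-Schwarz and the compactness computation already performed in the proof of Lemma \ref{3.2} (the change-of-metric trick giving $\int_M e^{-2\kappa d(x,y)^2/s}\,dy \approx s^{n/2}$ for $s \in (0,1]$), yields $\V e^{sL}\V_{\Cal{L}(L^2, L^\infty)} \lesssim s^{-n/4}$ for $s \in (0,1]$; for $s > 1$ one writes $e^{sL} = e^{L/2} e^{(s-1)L}$ and uses contractivity together with the $s=1/2$ bound to get $\V e^{sL}\V_{\Cal{L}(L^2,L^\infty)} \lesssim 1$, so that in all cases $\V e^{sL}\V_{\Cal{L}(L^2,L^\infty)} \lesssim (1 + s^{-n/4})$. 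Then
\[
\V e^{-t\sqrt{-L}}\V_{\Cal{L}(L^2,L^\infty)} \lesssim t \int_0^\infty s^{-3/2} e^{-t^2/(4s)} (1 + s^{-n/4}) \, ds.
\]
The contribution of the $1$ in the parenthesis is $t \int_0^\infty s^{-3/2} e^{-t^2/(4s)}\,ds \approx 1$ (it equals a constant, being the value at $\lambda \to 0$ — more carefully one splits at $s = t^2$ and uses $e^{-t^2/(4s)} \le 1$ on $s > t^2$ plus rapid decay on $s < t^2$), and the contribution of $s^{-n/4}$ is, after the substitution $s = t^2 u$, a constant times $t^{-n/2}$. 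Adding these gives $\lesssim (1 + t^{-n/2})$, which is (\ref{hghghg}).

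For the $\Cal{L}(L^2, \text{Lip})$ bound I would run the identical argument, replacing the heat estimate $\V e^{sL}\V_{\Cal{L}(L^2,L^\infty)} \lesssim (1+s^{-n/4})$ by the Lipschitz estimate $\V e^{sL}\V_{\Cal{L}(L^2,\text{Lip})} \lesssim (1 + s^{-n/4 - 1/2})$ of Lemma \ref{3.2}. One must check that $\nabla_x$ commutes with the subordination integral, which is justified by the gradient heat estimate (\ref{g1}) giving enough decay to differentiate under the integral sign, exactly as in the proof of Lemma \ref{3.2}. Then
\[
\V e^{-t\sqrt{-L}}\V_{\Cal{L}(L^2,\text{Lip})} \lesssim t \int_0^\infty s^{-3/2} e^{-t^2/(4s)} (1 + s^{-n/4 - 1/2}) \, ds \lesssim 1 + t^{-n/2 - 1},
\]
where the $s^{-n/4-1/2}$ term again produces the power $t^{2 \cdot (-n/4 - 1/2)} = t^{-n/2 - 1}$ after the scaling $s = t^2 u$ (and the $s$-integral $\int_0^\infty u^{-3/2 - n/4 - 1/2} e^{-1/(4u)}\,du$ converges at both ends). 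The main obstacle is purely bookkeeping: verifying that the scalar integrals converge — in particular near $s = 0$, where one relies on the factor $e^{-t^2/(4s)}$ for $t > 0$ but must be careful that the bound is uniform as $t$ varies, which is handled cleanly by the homogeneity substitution $s = t^2 u$ — and confirming that the differentiation under the integral sign in the Lipschitz case is legitimate, both of which are routine given (\ref{g}), (\ref{g1}), and Lemma \ref{3.2}.
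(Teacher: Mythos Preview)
Your argument for the $L^2\to L^\infty$ bound is essentially the paper's: both invoke the subordination identity, feed in $\V e^{sL}\V_{\Cal{L}(L^2,L^\infty)}\lesssim (1+s^{-n/4})$, and evaluate the two resulting scalar integrals. For the $L^2\to\text{Lip}$ bound, however, you and the paper diverge. You run subordination a second time with the Lipschitz heat estimate $\V e^{sL}\V_{\Cal{L}(L^2,\text{Lip})}\lesssim(1+s^{-n/4-1/2})$ from Lemma~\ref{3.2}, which after the scaling $s=t^2u$ produces exactly the claimed $t^{-n/2-1}$. The paper instead avoids a second subordination: for $t\in(0,1]$ it writes $\nabla e^{-t\sqrt{-L}}f=\nabla e^{tL}\bigl(e^{-t\sqrt{-L}}e^{-tL}f\bigr)$ (all three factors commute by spectral calculus), applies (\ref{esti1}) to $\nabla e^{tL}$, and then uses $L^2$-contractivity of the remaining factors; for $t\ge1$ it peels off $e^{-\sqrt{-L}}$ and argues similarly. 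This commuting trick is shorter and in fact yields the sharper power $t^{-n/4-1/2}$, though the lemma only records $t^{-n/2-1}$. Your route is perfectly correct and has the virtue of being uniform in method; the paper's trick is quicker and sidesteps the differentiation-under-the-integral justification you flag.
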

\begin{proof}
	As in Lemma \ref{3.2}, starting from (\ref{g}), we can establish that 
	\beq
	\V e^{tL}\V_{\Cal{L}(L^2, L^\infty)} \lesssim (t^{-n/4} + 1), \text{  }t > 0.
	\eeq
	
	Now the estimate on $e^{-t\sqrt{-L}}$ can be obtained from the Subordination identity (see (5.22), Chapter 3 of \cite{T6}),
	\begin{align}
	e^{-t\sqrt{-L}} & \approx \int^\infty_0 te^{-t^2/4s}s^{-3/2}e^{sL}ds.
	\end{align}
	
	This gives,
	\begin{align}
	\V e^{-t\sqrt{-L}}\V_{\Cal{L}(L^2, L^\infty)} & \lesssim \int^\infty_0 te^{-t^2/4s}s^{-3/2}\V e^{sL}\V_{\Cal{L}(L^2, L^\infty)}ds \lesssim \int^\infty_0 te^{-t^2/4s}s^{-3/2}(s^{-n/4} + 1)ds\\
	& \lesssim t\int_0^\infty e^{-t^2/4s}s^{-\frac{6 + n}{4}}ds + t\int_0^\infty e^{-t^2/4s}s^{-3/2}ds\nonumber.
	\end{align}
	
	Calling the first integral above $I_1$ and the second one $I_2$, we get $	I_2  \leq c_0,$ where $c_n$ is a multiple of $\Gamma(\frac{n + 1}{2})$ (see ~\cite{T6} for details, particularly pp 247-248).
	Similarly, 
	\begin{align*}
	I_1 & = t\int_0^\infty e^{-t^2/4s}s^{-\frac{3 + n/2}{2}}ds \leq c_{n/2}\frac{t}{(t^2)^{\frac{n + 2}{4}}}
\leq c_{n/2}t^{-n/2}.
	\end{align*}
	
	This gives (\ref{hghghg}).
	Now, when $t \in (0, 1]$, we can write,
	\begin{align*}
	\V\nabla e^{-t\sqrt{-L}} f\V_{L^\infty} & = \V\nabla e^{tL}e^{-t\sqrt{-L}}e^{- tL}f\V_{L^\infty} \lesssim (t^{-n/4 - 1/2})\V e^{-t\sqrt{-L}}e^{-tL}f\V_{L^2} \text{   from  } (\ref{esti1})\\
	& \lesssim (t^{-n/4 - 1/2})\V e^{-tL}f\V_{L^2} \lesssim (t^{-n/4 - 1/2})\V f\V_{L^2}.
	\end{align*}
	
	Lastly, when $t \in [1, \infty)$,
	\begin{align*}
	\V\nabla e^{-t\sqrt{-L}} f\V_{L^\infty} & = \V\nabla e^{-\sqrt{-L}}e^{-(t - 1)\sqrt{-L}}f\V_{L^\infty} \lesssim \V e^{-(t - 1)\sqrt{-L}}f\V_{L^2} \lesssim \V f\V_{L^2}.
	\end{align*}
	This proves the lemma.
\end{proof}

\section{Appendix 2: finite propagation speed of $\text{cos }t\sqrt{-L}$.}\label{FPSS}
In this section we investigate some sufficient criteria for $\text{cos }t\sqrt{-L}$ to have finite propagation speed under special boundary conditions. Namely, we will establish finite speed of propagation for those $L$ which can be written as 
\beq\label{special}
-L = D^*D + H
\eeq
where $D$ is a first order elliptic differential operator with either the generalized Dirichlet or Neumann boundary condition (see (\ref{DC}) and (\ref{NC}) below), and $H \in L^2(M)$ is nonnegative. 
To do this, we invoke the so-called Davies-Gaffney estimates: 
\begin{definition} 
	An operator $L$ satisfies the Davies-Gaffney estimates on a manifold $M$ if
	\beq\label{Davies}
	(e^{tL}u, v) \leq e^{-\frac{r^2}{4t}}\V u\V_{L^2}\V v\V_{L^2}
	\eeq
	for all $t > 0$, for all pairs of open subsets $U, V$ of $M$, $\text{supp } u \subset U$, $\text{supp }v \subset V$, sections $u \in L^2(U), v \in L^2(V)$ and $r = \text{dist }(U, V)$, the metric distance between $U$ and $V$.
\end{definition}
We also recall the following (see ~\cite{S}, Theorem 2).
\begin{lemma}\label{sik}
	For a self-adjoint negative semi-definite operator $L$ on $L^2(M)$, satisfaction of the Davies-Gaffney estimates is equivalent to finite propagation speed property of $\text{cos }t\sqrt{-L}$. Furthermore, it is enough to check the Davies-Gaffney estimates only for open sets $U, V$ which are balls around some points.
\end{lemma}
It might be pointed out that this method is eminently suited to establishing finite propagation speed type results particularly when the manifold has boundary or is less ``nice'' in some other way, as Lemma \ref{sik} above holds in the great generality of metric measure spaces $(X, d, \mu)$, where $\mu$ is a Borel measure with respect to the topology defined by $d$.

Now, let $-L = D^*D + H$, where $D : H^{1}(\overline{M}, E) \rightarrow H^2(\overline{M}, F)$ be a first-order differential operator 
between sections of vector bundles. Assume that the symbol $\sigma_D(x, \xi) : E_x \rightarrow F_x$ is injective for $x \in \overline{M}, \xi \in T^*_x\overline{
	M}\setminus \{0\}$. Following ~\cite{T2}, consider the following generalization of the Dirichlet condition on $\Cal{D}(D)$:
\beq\label{DC}
u \in \mathcal{D}(D) \Rightarrow \beta(x)u(x) = 0, \text{    }\forall x \in \pa M,
\eeq
where $\beta(x)$ is an orthogonal projection on $E_x$ for all $x \in \pa M$. We also consider the following generalization of the Neumann boundary condition:
\beq \label{NC}
u \in \mathcal{D}(D) \Rightarrow \gamma(x)\sigma_D (x, \nu) u(x) = 0 , \text{    }\forall x \in \pa M,
\eeq
where $\nu(x)$ is the outward unit normal to $\pa M$ and $\gamma(x)$ is an orthogonal projection on $E_x$ for all $x \in \pa M$.

We first argue that both these boundary conditions have the consequence that
\beq\label{conclu}
\langle \sigma_D(x, \nu) v, w\rangle = 0, \text{    }\forall x \in \pa M,
\eeq
when $v \in \Cal{D}(D), w \in \Cal{D}(D^*)$ and $v, w$ are smooth. 
 This is because
\[
\int_M(\langle Dv, w\rangle - \langle v, D^*w\rangle) dV = \frac{1}{i}\int_{\partial M}\langle \sigma_D(x, \nu)v, w\rangle dS.
\]

Now, $w \in \Cal{D}(D^*)$ implies that the left hand side in the above equation vanishes. So, for the Dirichlet boundary condition, for smooth $v, w$ we have 
\begin{equation}\label{DBC}
w \in \mathcal{D}(D^*) \Longrightarrow (I - \beta(x))\sigma_D(x, \nu)^*w(x) = 0, x \in \partial M,
\end{equation}
where $\nu$ is the outward unit normal to $\partial M$. This gives, for smooth $v$ and $w$, 
\[
v \in \mathcal{D}(D), w \in \mathcal{D}(D^*) \Longrightarrow \langle \sigma_D(x, \nu)v, w\rangle = 0 \text{   on   } \partial M.
\]

For the Neumann boundary condition, (\ref{DBC}) will be replaced by 
\begin{equation}\label{NBC}
w \in \mathcal{D}(D^*) \Longrightarrow (I - \gamma(x))w(x) = 0, x \in \partial M
\end{equation}
with the same conclusion (\ref{conclu}). With that in place, we can now prove Proposition \ref{FSOP}.

\begin{proof}
	We first observe that by the Cauchy-Schwarz inequality
	\beq
	(e^{tL}u, v) \leq \V\chi_V e^{tL}u\V_{L^2}\V v\V_{L^2},
	\eeq
	where $\chi_V$ represents the characteristic function of $V$. So, to get finite speed of propagation, we want to establish (\ref{Davies}), 
	which will in turn be implied by 
	\beq
	\V\chi_V e^{tL}u\V_{L^2} \leq e^{-\frac{r^2}{4t}}\V u\V_{L^2},
	\eeq
	when supp $ u \subset U$. Now, let $w = \chi_V e^{tL}u$ and call $\rho = \frac{r}{t}$. Then we have
	\begin{align}\label{thenwehave}
	\int_V |w|^2 dx & \leq e^{-\rho r}\int_V \langle w, w\rangle e^{\varphi(x)}dx \leq e^{-\rho r}\int_M \langle e^{tL}u, e^{tL}u\rangle e^{\varphi(x)}dx.
	\end{align}
	 
	Let us define
	\beq\label{ETI}
	E(t) = \int_M \langle e^{tL}u, e^{tL}u\rangle e^{\varphi(x)}dx.
	\eeq
	
	Differentiating (\ref{ETI}) with respect to $t$, we get
	\begin{align*}
	\frac{1}{2}E'(t) & = \text{Re}\int_M \langle\partial_t e^{tL}u, e^{tL}u\rangle e^{\varphi(x)}dx = \text{Re}\int_M \langle Le^{tL}u, e^{tL}u\rangle e^{\varphi(x)}dx \\
	& = - \text{Re}\int_M \langle (D^*D + H) e^{tL}u, e^{tL}u\rangle e^{\varphi(x)}dx\\
	& = - \text{Re}\int_M \langle De^{tL}u, D(e^{tL}ue^{\varphi(x)})\rangle dx - \text{Re}\int_M H\langle e^{tL}u, e^{tL}u\rangle e^{\varphi(x)}dx \\
	& + \text{Re}\frac{1}{i} \int_{\pa M}\langle \sigma (x, \nu)e^{tL}u e^{\varphi(x)}, De^{tL}u\rangle dS\\
	& = - \text{Re} \int_M (\langle De^{tL}u, De^{tL}u\rangle e^{\varphi(x)} + \langle De^{tL}u, [D, e^{\varphi(x)}]e^{tL}u\rangle) dx \\ 
	& - \text{Re}\int_M H\langle e^{tL}u, e^{tL}u\rangle e^{\varphi(x)}dx + \mbox{     }\text{Re}\frac{1}{i} \int_{\pa M}\langle \sigma (x, \nu)e^{tL}u e^{\varphi(x)}, De^{tL}u\rangle dS\\
	& \leq - \text{Re} \int_M (\langle De^{tL}u, De^{tL}u\rangle e^{\varphi(x)} + \langle De^{tL}u, [D, e^{\varphi(x)}]e^{tL}u\rangle) dx,
	\end{align*}
	using the facts that $H \geq 0$ and that under the Dirichlet or the Neumann boundary condition, the last term $\int_{\pa M}\langle \sigma_D (x, \nu)e^{tL}u e^{\varphi(x)}, Du\rangle dS$ disappears. Now, if we can say that 
	\begin{align*}
	\frac{1}{2}E'(t) \leq \frac{\rho^2}{4}\int_M \langle e^{tL}u, e^{tL}u\rangle, e^{\varphi(x)}dx,
	\end{align*}
	then we will be in a position to use Gronwall's inequality.
	
	Now what is the condition that allows this? Let us define $P = [D, e^{\varphi }]$. Now we have
	\begin{align*}
	4(De^{tL}u, Pe^{tL}u) & = 4(e^{\varphi /2}De^{tL}u, e^{-\varphi /2}Pe^{tL}u) \leq 4\V e^{\varphi /2}De^{tL}u\V^2_{L^2} + \V e^{-\varphi /2}Pe^{tL}u\V^2_{L^2}.
	\end{align*}
	
	So it seems that the correct condition is to demand that 
	\[
	\V e^{-\varphi /2}Pe^{tL}u\V^2_{L^2} \leq \rho^2\V e^{\varphi /2}e^{tL}u\V^2_{L^2}
	\]
	or, 
	\beq 
	\V e^{-\varphi/2}Pv\V_{L^2} \leq \rho\V e^{\varphi/2}v\V_{L^2}.
	\eeq
	
	Heuristically, we can say that a condition like this is expected, as the propagation phenomenon of $\text{cos }t\sqrt{-L}$ will be dictated by the interaction of $L$, and hence of $D$ with the distance function on $\overline{M}$.
	 
	So, now we can say
	\beq 
	E'(t) \leq \rho^2/2 E(t).
	\eeq
	
	This gives, by Gronwall's inequality, $E(t) \leq e^{\rho^2t/2} E(0)$. 
	Plugging everything back, we have from (\ref{thenwehave}),
	\[
	\int_V |w|^2 dx \leq e^{\rho^2t/2 - \rho r}\V u\V^2_{L^2}.
	\]
	
	Using $\rho = r/t$, we have
	\beq
	\int_V |w|^2 dx \leq e^{-\frac{r^2}{2t}}\V u\V^2_{L^2}.
	\eeq
	This proves what we want. 
\end{proof}

\begin{remark}
	Though (\ref{cond}) does not seem to be much of an improvement over (\ref{Davies}), in many practical situations (\ref{cond}) is easier to verify than (\ref{Davies}). For example, if $L$ is the Laplace Beltrami operator with Dirichlet or Neumann boundary condition, then (\ref{cond}) holds trivially, because $|\nabla \varphi(x)| \leq \frac{r}{t}$ (as the gradient of the distance function to any set is known as a 1-Lipschitz function), which gives us back the special case of finite propagation speed of $\text{cos }t\sqrt{-\Delta}$. Verifying (\ref{Davies}) seems to be harder in this case.
	
\end{remark}

Now, we extend the range of $H$ in Proposition \ref{FSOP} to $H \in L^2(M)$. Towards that end, pick $H_n$ continuous such that $H_n \longrightarrow H$ and consider $\Cal{L}_n$ given by $-\Cal{L}_n = D^*D + H_n$. Let $-\Cal{L} = D^*D + H$.\newline
We can see that $\Cal{L}_n (u) \longrightarrow \Cal{L}(u)$ for $u \in \Cal{D}(D^*D)$ as $n \longrightarrow \infty$. That means, $\Cal{L}_n \longrightarrow \Cal{L}$ in the strong resolvent sense as $n \longrightarrow \infty$ (see ~\cite{RS}, Theorem VIII.25(a)). Then, $\mbox{cos }tx$ and $ e^{-tx}$ being bounded continuous functions on $\RR$ for all $t > 0$, by Theorem VIII.20(b) of \cite{RS}, we have $\forall u \in \Cal{D}(D)$,
\begin{align*}
\mbox{cos }t\sqrt{-\Cal{L}_n}u & \longrightarrow \mbox{cos  }t\sqrt{-\Cal{L}}u,\\
e^{t\Cal{L}_n}u & \longrightarrow e^{t\Cal{L}}u.
\end{align*} 

The finite propagation speed of $\mbox{cos  }t\sqrt{-\Cal{L}}$ now follows  by the Davies-Gaffney estimates: if for a fixed pair $U, V \subset \overline{M}$ of open sets, $L^2$ sections $u, v$ such that $\mbox{supp  }u \subset U$, $\mbox{supp  }v \subset V$, $r = \mbox{dist }(U, V)$, we have 
\beq
( e^{t\Cal{L}_n}u, v) \leq e^{-\frac{r^2}{4t}}\V u\V_{L^2}\V v\V_{L^2},
\eeq
then in the limit, we must have 
\beq
(e^{t\Cal{L}}u, v) \leq e^{-\frac{r^2}{4t}}\V u\V_{L^2}\V v\V_{L^2}.
\eeq
\subsection{Acknowledgements} I thank my Ph.D. advisor Michael Taylor for bringing this project to my notice. I also thank Jeremy Marzuola and Perry Harabin for discussions.
\\
\\
\bibliographystyle{plain}
\def\noopsort#1{}

\end{document}